\newcounter{count}
\numberwithin{count}{section}
\newtheorem{Lemma}[count]{Lemma}
\newtheorem{Remark}[count]{Remark}
\newtheorem{Example}[count]{Example}
\newtheorem{Definition}[count]{Definition}
\newtheorem{Theorem}[count]{Theorem}
\newtheorem{Conjecture}[count]{Conjecture}
\newtheorem{Problem}[count]{Open Problem}
\begin{document}

\author[O.~Katkova]{Olga Katkova}

\address{Department of Mathematics, University of Massachusetts Boston, USA}
\email{olga.katkova@umb.edu }

\author[A.~Vishnyakova]{Anna Vishnyakova$^\ast$}

\address{Department of Mathematics, Holon Institute of Technology,
Israel}
\address{$^{\ast}$Corresponding author}

\email{annalyticity@gmail.com}

\title[Convolution operators and totally positive sequences]
{Convolution operators  preserving the set of totally positive sequences}

\begin{abstract}
A real sequence  $(a_k)_{k=0}^\infty$ is called {\it totally positive}  
if all minors of the infinite Toeplitz matrix $ \left\| a_{j-i}  
\right\|_{i, j =0}^\infty$ are nonnegative (here $a_k=0$
for $k<0$). In this paper, which continues our earlier work \cite{kv}, 
we investigate the set of real sequences 
$(b_k)_{k=0}^\infty$ with the property that for every totally positive sequence 
$(a_k)_{k=0}^\infty,$ the sequense of termwise products $(a_k b_k)_{k=0}^\infty$ is 
also totally positive. In particular, we show that for every totally 
positive sequence $(a_k)_{k=0}^\infty$ the sequence 
$\left(a_k a^{-k (k-1)}\right)_{k=0}^\infty$ is totally positive whenever
$a^2\geq 3{.}503.$ We also propose several open 
problems concerning convolution operators that preserve total positivity.

\end{abstract}

\keywords {Totally positive sequences; multiply positive sequences; 
 real-rooted polynomials; multiplier sequences; 
 Laguerre-P\'olya class}

\subjclass{30C15; 15B48; 30D15;  26C10; 30D99; 30B10}

\maketitle

\section{Introduction}

In this paper we study multiply positive and totally positive sequences, 
as well as linear operators that preserve these classes of sequences.

\begin{Definition}
A real sequence $(a_k)_{k=0}^\infty$  is called  \emph{$m$-times positive }
(for some $m \in \mathbb{N}$) if all minors of order at most $m$ of the
infinite matrix
\begin {equation}
\label{mat}
 \left\|
  \begin{array}{ccccc}
   a_0 & a_1 & a_2 & a_3 &\ldots \\
   0   & a_0 & a_1 & a_2 &\ldots \\
   0   &  0  & a_0 & a_1 &\ldots \\
   0   &  0  &  0  & a_0 &\ldots \\
   \vdots&\vdots&\vdots&\vdots&\ddots
  \end{array}
 \right\|
\end {equation}
are nonnegative. We denote by $\mathrm{TP}_m$ the class of
$m$-times positive sequences, and by $\widetilde{\mathrm{TP}}_m$ the class of generating functions of
$m$-times positive sequences, that is, those of the form 

\[
f(x) = \sum_{k=0}^\infty a_k x^k, \qquad (a_k)_{k= 0}^\infty \in \mathrm{TP}_m.
\]
\end{Definition}

\begin{Definition}
A real sequence
$(a_k)_{k=0}^\infty$  is called totally positive  
if all minors of the infinite matrix
(\ref{mat}) are nonnegative.
The class of totally positive sequences is denoted by $\mathrm{TP}_\infty,$
the class of the generating functions of
totally positive sequences is denoted by $\widetilde{\mathrm{TP}}_\infty.$
\end{Definition}

Multiply positive sequences (also known as P\'olya frequency
sequences) were introduced by Fekete in 1912 (see~\cite{fek}) in
connection with the problem of determining the exact number of
positive zeros of a real polynomial. 

The concept of total positivity has found numerous applications and has been
studied from many different points of view; see, for example, \cite{ando}, 
\cite{tp} or \cite{pin}.  It has applications to the distribution of zeros 
of polynomials and entire functions,  P\'olya frequency sequences, 
unimodality and log-concavity, stochastic processes and approximation 
theory, mechanical systems, planar networks, combinatorics and 
representation theory.

The class $\mathrm{TP}_\infty$ was completely described by Aissen,
Schoenberg, Whitney and Edrei in~\cite{aissen} (see also
\cite[p. 412]{tp}). We will recall their result, which has become a classic.

{\bf Theorem ASWE. } {\it A function $f $ belongs to $\widetilde{\mathrm{TP}}_\infty$ if 
and only if
\begin{equation}
\label{aswe}
f(z)=C z^q e^{\gamma z}\prod_{k=1}^\infty \frac{ (1+\alpha_kz)}{(1-\beta_kz)},
\end{equation}
where $C\ge 0, q \in \mathbb{Z},
\gamma\ge 0,\alpha_k\ge 0,\beta_k\ge 0,$ and $ \sum_{k=1}^\infty(\alpha_k+\beta_k)
<\infty.$ }

By the  ASWE theorem, a real polynomial $p(z) = \sum _{k=0}^n a_k z^k, \  a_k
\geq 0 ,$ has only real zeros if and only if  $(a_0,
a_1, \ldots , a_n, 0, 0, \ldots ) \in \mathrm{TP}_\infty .$

Determining whether a polynomial has only real zeros is typically a subtle problem. 
It arises frequently in analysis, combinatorics, approximation theory, and many other areas 
of mathematics. This motivates the study of linear operators that preserve the class of real-rooted 
polynomials. In their fundamental work on zero-preserving transformations, G.~P\'olya and 
J.~Schur (\cite{polsch}) introduced the concept of multiplier sequences.

\begin{Definition} A sequence $(\gamma_k)_{k=0}^\infty$ of real
numbers is called a multiplier sequence if, whenever a real
polynomial $P(x) = \sum_{k=0}^n a_k z^k $ has only real zeros, the
polynomial $\sum_{k=0}^n \gamma_k a_k z^k $ has only real zeros. 
We denote the class of multiplier sequences by $\mathcal{MS}$.
\end{Definition}

The following sequence gives a simple example of a multiplier sequence: $\gamma_k =k, k=0, 1, 2, \ldots$ 
Indeed, if a real polynomial $P(z) = \sum_{k=0}^n a_k z^k $ has only real zeros, the polynomial 
$\sum_{k=0}^n k a_k z^k = z P^\prime(z),$ also has only real zeros.

In their classical 1914 paper, G.~P\'olya and J.~Schur obtained a complete characterization of the set 
of  multiplier sequences. To state their theorem precisely, we recall the definition of the Laguerre-P\'olya 
class of type I.

\begin{Definition} 
A real entire function $f$ is said to be in the {\it Laguerre--P\'olya class 
of type I}, written $f \in \mathcal{L-P} I$, if it can
be expressed in the following form  
\begin{equation}  \label{lpc1}
 f(z) = c z^n e^{\beta z}\prod_{k=1}^\infty
\left(1+\frac {z}{x_k} \right),
\end{equation}
where $c \in  \mathbb{R},  \beta \geq 0, x_k >0 $, 
$n$ is a nonnegative integer,  and $\sum_{k=1}^\infty x_k^{-1} <
\infty$. The product on the right-hand side can be
finite or empty (in this case, we consider the product to be equal to 1).
\end{Definition}

Functions in the Laguerre--P\'olya class of type I play a central role in the theory of 
real-rooted polynomials and zero-preserving operators, because the class $\mathcal{L-P} I$ 
consists precisely of real entire functions that arise as locally uniform limits of polynomials 
with only real and non-positive zeros. The following famous theorem provides a stronger result. 

{\bf Theorem B} (E.~Laguerre and G.~P\'{o}lya, see, for example,
\cite[p. ~42--46]{HW}) and \cite[chapter VIII, \S 3]{lev}). {\it   

(i) Let $(P_n)_{n=1}^{\infty},\  P_n(0)=1, $ be a sequence
of real polynomials having only real negative zeros which  
converges uniformly on the disc $|z| \leq A, A > 0.$ Then this 
sequence converges locally uniformly in $\mathbb{C}$ to an entire function
 from the class $\mathcal{L-P}I.$
 
(ii) For any $f \in \mathcal{L-P}I$ there is a
sequence of real polynomials with only real nonpositive 
zeros, which converges locally uniformly to $f$.}

Now we can formulate theorem of G. P\'olya and J.Schur, which completely describes the class of 
multiplier sequences.

{\bf Theorem C} (G. P\'olya and J.Schur, cf. \cite{polsch}, \cite[pp. 100-124]{pol}
and\cite[pp. 29-47]{O}).  
{\it Let  $(\gamma_k)_{k=0}^\infty$ be a given 
real sequence. The following 
three statements are equivalent.}

{\it

1. $(\gamma_k)_{k=0}^\infty$ is a multiplier sequence.

2.  For every $n\in \mathbb{N}$ the
polynomial $P_n(z) =\sum_{k=0}^n {\binom{n}{k}} \gamma_k z^k $ has only real 
zeros of the same sign. 

3. The power series $ \Phi (z) := \sum_{k=0}^\infty \frac
{\gamma_k}{k!}z^k$ converges absolutely in the whole complex plane
and the entire function $\Phi(z)$ or the entire function
$\Phi(-z)$ admits the representation
\begin{equation}
\label{pred}
 c  z^n e^{\beta z} \prod_{k=1}^\infty
\left(1+\frac{z}{x_k}\right),
\end{equation}

where $c\in{\mathbb{R}}, \beta \geq 0, n\in {\mathbb{N}}\cup \{0\}, 0<x_k
\leq \infty,\   \sum_{k=1}^\infty \frac{1}{x_k} < \infty.$}

The following fact is a simple corollary of the previous result.

{\bf Corollary of Theorem C.}   {\it The sequence 
$(\gamma_0, \gamma_1, \ldots, \gamma_l,$ $ 0, 0, \ldots )$ is a
multiplier sequence if and only if the polynomial $P(z)=
\sum_{k=0}^l \frac {\gamma_k}{k!}z^k$ has only real zeros of the
same sign.}

\begin{Remark}
It is easy to see that, if $(\gamma_k)_{k=0}^\infty$ is a multiplier sequence, then
for every $l\in \mathbb{N}$ the sequence $(\gamma_{k+l})_{k=0}^\infty$ is also 
a multiplier sequence. The situation for totally positive sequences is far subtler. It 
turned out that $\mathrm{TP}_\infty$  is not shift-invariant under coefficients, even 
when the original sequence is extremally well behaved.  

In order to show that, let us consider a sequence  ${\mathbf E} =\left(\frac{1}{k!}\right)_{k=0}^\infty$
with the generating function $E(z)=\sum_{k=0}^\infty  \frac{z^k}{k!} = e^z.$ 
By theorem ASWE, the sequence ${\mathbf E} $ is totally positive. At the same time, the sequence 
$\left(\frac{1}{(k+1)!}\right)_{k=0}^\infty$ is not totally positive. Indeed, its generating function 
$E_1(z)=\sum_{k=0}^\infty  \frac{z^k}{(k+1)!} = \frac{e^z -1}{z}$ has infinitely many non-real zeros. 
Hence, it cannot correspond to a totally positive sequence by the ASWE characterization. 

One can prove that for every $l\in \mathbb{N}$ the sequence
$\left(\frac{1}{(k+l)!}\right)_{k=0}^\infty$ is not totally positive.
\end{Remark}

In this paper, we study an analog of the multiplier sequences for the set of 
totally positive sequences. We need the following definition.

\begin{Definition}  Let ${\mathbf A} =(a_k)_{k=0}^\infty$ be  a nonnegative sequence.
We define the following linear convolution operator on the set of
real sequences:
$$\Lambda_{\mathbf A} ( (b_k)_{k=0}^\infty) =  (a_k b_k)_{k=0}^\infty. $$
\end{Definition} 

The following problem was posed by Alan Sokal during the 
 AIM workshop ``Theory and applications of total positivity'',
July 24-28, 2023 (see \cite{conf} for more details). 

\begin{Problem} \label{P1}  To describe the set of non-negative sequences 
${\mathbf A} =(a_k)_{k=0}^\infty,$ such that the corresponding convolution 
operator $\Lambda_{\mathbf A}$ preserves the set of $\mathrm{TP}_\infty-$sequences:
for every  $(b_k)_{k=0}^\infty \in \mathrm{TP}_\infty$ we have $\Lambda_{\mathbf A} 
((b_k)_{k=0}^\infty) \in  \mathrm{TP}_\infty. $ From now on such sequences ${\mathbf A}$ will be called 
$\mathrm{TP_\infty}-$preservers.
\end{Problem}

See \cite{DyaSok} by A.~Dyachenko and A.~Sokal (and a previous works of 
A.~Dyachenko \cite{Dya1} and \cite{Dya2}) in connection with the problem 
above.

We denote by $A$ the generating function of a sequence 
 ${\mathbf A} =(a_k)_{k=0}^\infty,$ that is the function
\[
A(z) =\sum_{k=0}^\infty a_k z^k.\]
 
A natural candidate for a $\mathrm{TP}_\infty-$preserver is the multiplier sequence 
${\mathbf \Gamma} = (k)_{k=0}^\infty,$ since multiplication by $k$ corresponds to 
differentiation, which preserves real-rootedness. Indeed, 
the corresponding convolution operator $\Lambda_{\mathbf \Gamma} 
( (b_k)_{k=0}^\infty) =  (k b_k)_{k=0}^\infty$ preserves the set of finite
totally positive sequences (the set of coefficients of polynomials with non-negative coefficients 
and only real zeros). However, this analogy with multiplier sequences breaks down in the infinite 
setting: this operator does not preserve $\mathrm{TP}_\infty$.
In order to see that, let us consider the function $f(z) = \frac{1}{(1-z)(2-z)}= \sum_{k=0}^\infty
c_k z^k,$ where $c_k =1 - \frac{1}{2^{k+1}}$. By Theorem ASWE, $ (c_k)_{k=0}^\infty
\in \mathrm{TP}_\infty.$ But $\sum_{k=0}^\infty k c_k z^k = z f^\prime(z) =
 \frac{z(3 - 2z)}{(1-z)^2 (2-z)^2},$ this function has a positive zero, so the sequence
 of its coefficients is not a $\mathrm{TP}_\infty-$sequence. Thus, differentiating does not 
 preserve  the set of generating functions of all  totally positive sequences. 

This failure of differentiation to preserve $\mathrm{TP}_\infty$ motivates the following 
fundamental question: for which generating functions of totally positive sequences 
$B(z) =\sum_{k=0}^\infty b_k z^k$ does its derivative $B'(z)$ remain in the class 
$\widetilde{\mathrm{TP}}_\infty?$ Our first result provides a complete answer to this 
question.
 
\begin{Theorem} \label{St1} Let  $(b_k)_{k=0}^\infty$   be  a $ \mathrm{TP}_\infty-$sequence. 
Then $(k b_k)_{k=0}^\infty \in \mathrm{TP}_\infty$ if and only if the generating function 
$B(z) =\sum_{k=0}^\infty b_k z^k$ is an entire function from the class  $\mathcal{L-P} I$ or 
$B(z) = \frac{P(z)}{(1-\beta z)^m},$ where $ \beta > 0, m \in \mathbb{N},$  $n = \deg P \leq m,$
and $P$ has non-negative coefficients and only real non-positive zeros.
\end{Theorem}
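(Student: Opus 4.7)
The condition $(kb_k)_{k=0}^\infty\in\mathrm{TP}_\infty$ is equivalent to $zB'(z)\in\widetilde{\mathrm{TP}}_\infty$; by ASWE, this forces every zero of $B'$ to lie in $(-\infty,0]$. Since the zeros of $B$ itself already lie in $(-\infty,0]$, the ratio $B'(z)/B(z)$ is well-defined on $(0,\infty)$ outside the poles of $B$, and its logarithmic-derivative expansion
\[
\frac{B'(z)}{B(z)}=\frac{q}{z}+\gamma+\sum_{k}\frac{\alpha_k}{1+\alpha_k z}+\sum_{k}\frac{\beta_k}{1-\beta_k z}
\]
coming from the ASWE form of $B$ will be the central tool throughout.

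\textbf{Sufficiency.} If $B\in\mathcal{L-P}I$, then $B'\in\mathcal{L-P}I$ by approximating $B$ with polynomials having only real non-positive zeros (Theorem~B), applying Rolle's theorem to each approximant, and passing to the locally uniform limit; hence $zB'\in\mathcal{L-P}I\subset\widetilde{\mathrm{TP}}_\infty$. For $B(z)=P(z)/(1-\beta z)^m$ with $P(z)=c\prod_{k=1}^n(1+\alpha_k z)$ and $n\le m$, direct differentiation yields $zB'(z)=zQ(z)/(1-\beta z)^{m+1}$ with $Q(z)=(1-\beta z)P'(z)+m\beta P(z)$. Summing the elementary identity $(1-\beta z)/(1+\alpha_k z)=-\beta/\alpha_k+(\alpha_k+\beta)/[\alpha_k(1+\alpha_k z)]$ over $k$ gives the key formula
\[
\frac{Q(z)}{P(z)}=(m-n)\beta+\sum_{k=1}^n\frac{\alpha_k+\beta}{1+\alpha_k z},
\]
a Mittag--Leffler expansion with non-negative constant term and positive residues. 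An interlacing/IVT argument between consecutive poles $-1/\alpha_k$ (with one extra zero in $(-\infty,-1/\alpha_1)$ when $m>n$, forced by the positive constant term) locates all $\deg Q\le m$ zeros of $Q$ in $(-\infty,0]$; combined with a positive leading coefficient, this places $zB'$ in ASWE form. The case where $P$ has a zero at the origin is treated identically after inserting a $q/z$ term in the expansion.

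\textbf{Necessity.} Writing $B$ in ASWE form, if $B$ has no poles then $B\in\mathcal{L-P}I$ and we are done. Otherwise I run four successive intermediate-value exclusions on $B'/B$, each producing a zero of $B'$ at some $z_0>0$ (contradicting $zB'\in\widetilde{\mathrm{TP}}_\infty$, whose zeros must be non-positive): (i) If there are two distinct pole values $\beta>\beta'>0$, then $B'/B$ runs from $-\infty$ at $(1/\beta)^+$ to $+\infty$ at $(1/\beta')^-$ on $(1/\beta,1/\beta')$. (ii) With a unique pole value $\beta$, if $\gamma>0$ then $B'/B$ runs from $-\infty$ at $(1/\beta)^+$ to $\gamma>0$ at infinity on $(1/\beta,\infty)$. (iii) If there are infinitely many factors $(1+\alpha_k z)$ in the numerator, the partial product $P_0$ grows super-polynomially on the positive axis; concretely, for arbitrarily large $z$ the count $\#\{k:\alpha_k z\ge 1\}$ exceeds $2m$, and each such index contributes at least $1/(2z)$ to $P_0'(z)/P_0(z)=\sum_k 1/(z+1/\alpha_k)$, so $B'/B>0$ somewhere in $(1/\beta,\infty)$. (iv) Finally, if the numerator polynomial degree is $n'>m$, then $B'(z)/B(z)\sim(n'-m)/z>0$ at infinity, paired with $B'/B\to-\infty$ at $(1/\beta)^+$. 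The surviving structures are precisely those in the theorem.

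\textbf{Main obstacle.} Step (iii) is the most delicate: translating super-polynomial growth of a transcendental $\mathcal{L-P}I$-type factor into a pointwise lower bound on its logarithmic derivative. The counting argument sketched above is robust enough to avoid the subtler integral estimate $\int_{z_0}^z P_0'/P_0=\log(P_0(z)/P_0(z_0))$. The remaining three exclusions, together with both sufficiency cases, are then clean applications of the same positive-axis log-derivative framework set up in the first paragraph.
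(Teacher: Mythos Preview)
Your proof is correct and follows the same overall strategy as the paper: use the ASWE representation of $B$, and show via an intermediate-value argument on $(0,\infty)$ that $B'$ acquires a forbidden positive zero unless $B$ has the stated form.

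The execution differs in two places. For necessity, the paper works directly with $B$: two distinct pole values force a critical point of $B$ between them, and if the numerator $F$ is not a polynomial of degree $\le m$ then $|B(x)|\to\infty$ at both $x\to(1/\beta)^+$ and $x\to+\infty$, again forcing a positive critical point. You instead run everything through the logarithmic derivative $B'/B$, splitting the second step into the three sub-cases (ii)--(iv). Your counting bound in (iii) is a clean replacement for the paper's growth argument (which, as written, glosses over the parity of $m$). For sufficiency in the rational case, the paper applies Rolle's theorem to $B$ on the negative axis and uses $B(x)\to 0$ as $x\to-\infty$ to pick up the extra zero when $m>n$; you obtain the same zero count from the explicit partial-fraction identity $Q/P=(m-n)\beta+\sum_k(\alpha_k+\beta)/(1+\alpha_k z)$. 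The two methods are equivalent in strength; your log-derivative framework has the advantage of unifying both directions under a single tool, while the paper's Rolle argument is slightly shorter and handles repeated roots of $P$ without comment.
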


Since Theorem~\ref{St1} characterizes for which sequences  multiplication by $k$ preserves 
total positivity,  it seems natural to study a similar question for other important multiplier 
sequences. It is well known (see, for example, \cite{cc1},  \cite{klv1} or \cite{ngthv3}), that the 
following sequences are multiplier sequences, i.e. preserve the set of totally positive sequences 
with entire generating functions:
\begin{eqnarray}
\label{ms11}
\nonumber
 \left(\frac{1}{k!}\right)_{k=0}^\infty, \   \left(\frac{1}{a^{k(k-1)}}\right)_{k=0}^\infty, a>1, \quad  
\left(\frac{1}{k! a^{k(k-1)}}\right)_{k=0}^\infty, a>1, 
& \\  
\left(\frac{1}{(q^k-1)(q^{k-1} -1)\cdot \ldots \cdot (q-1)}\right)_{k=0}^\infty, q>1.
\end{eqnarray}   
The following open problem is of interest.             

\begin{Problem}
Let $(a_k)_{k=0}^\infty$ be one of the sequences from the list (\ref{ms11}). 
Describe the set of totally positive sequences $(b_k)_{k=0}^\infty,$
such that the sequence $(a_k b_k)_{k=0}^\infty$ is also totally positive.
\end{Problem}
A partial answer to this problem for the case $a_k =\frac{1}{a^{k(k-1)}}, a>1, 
k=0, 1, 2,  \ldots$  is given in the Theorem~\ref{Th3}. 
 
In our previous work \cite{kv}, we completely characterized the $TP_\infty$-preservers whose 
generating functions have at least one pole. We recall that result below.

{\bf Theorem D} (\cite{kv}).  
{\it Let ${\mathbf A} =(a_k)_{k=0}^\infty$ 
be a nonnegative sequence, and its generating function is a meromorphic function 
with at least one pole. Then  for every  $(b_k)_{k=0}^\infty \in \mathrm{TP}_\infty$ we have  
$\Lambda_{\mathbf A} ((b_k)_{k=0}^\infty) \in  \mathrm{TP}_\infty$  if and only if $A(z) = 
\frac{C}{1- \beta z}, C > 0, \beta > 0.$}

Thus, in the meromorphic case,  $\mathrm{TP}_\infty$-preservers are precisely geometric sequences. 
The main difficulties arise in the case of entire generating functions. The description of 
$\mathrm{TP}_\infty$-preservers whose generating functions are entire 
functions is still unknown.

To analyze the case where the generating function is entire, it is convenient to introduce the second 
quotients, which provide a useful measure of the local convexity of the coefficient sequence.

 Let  $f(z) = \sum_{k=0}^\infty a_k z^k$  be an 
entire function with positive coefficients. We define the second quotients  $q_n$ as follows:

\begin{equation}
\label{qqq} 
q_n=q_n(f):=\frac {a_{n-1}^2}{a_{n-2}a_n},\
n\geq 2.
\end{equation}

The formulas below follow by iteratively applying the definition of the second quotients.

\begin{equation}
\label{defq}
 a_n=\frac
{a_1}{q_2^{n-1} q_3^{n-2} \ldots q_{n-1}^2 q_n} \left(
\frac{a_1}{a_0} \right) ^{n-1},\ n\geq 2.
\end{equation}

The second quotients $q_n$ provide useful information about the distribution of zeros 
of the associated entire function. This connection dates back to classical work of 
J.I. Hutchinson~\cite{hut}. In 1926, Hutchinson established one of the first general 
coefficient-based criteria for an entire functions  with positive coefficients and all its 
Taylor sections to have only real non-positive zeros.  His result shows that large second 
quotients force the function into the Laguerre--P\'olya class of type I.

{\bf Theorem E} (J. I. ~Hutchinson, \cite{hut}). { \it Let $f(x)=
\sum_{k=0}^\infty a_k x^k$, $a_k > 0$ for all $k$. 
Then 
$$q_n(f)\geq 4 \ \ \mbox{ for all} \ \ n\geq 2,$$  
if and only if the following two conditions are fulfilled:

(i) The zeros of $f(x)$ are all real, simple and negative, and 

(ii) the zeros of any polynomial $\sum_{k=m}^n a_k x^k$, $m < n,$  formed 
by taking any number 
of consecutive terms of $f(x) $, are all real and non-positive.}

From Hutchinson's theorem we see that $f$ has only real zeros when $q_k(f) \in  [4, + \infty), 
k \geq 2.$ It is easy to show that, if the estimation of $q_k(f)$ only from below is
given then the constant  $4$  in $q_k (f) \geq 4$ is the smallest possible to conclude
that $f  \in \mathcal{L-P}I$. However, if we have the estimation of $q_k$ from below and  
from above, then the constant $4$ in the condition $q_k \geq 4$ can be reduced to 
conclude that $f \in \mathcal{L-P}I$, see  \cite{HishAn}. For some extensions of Hutchinson's
results see \cite[\S4]{cc1}. See also \cite{ngthv2} for some necessary conditions.

The following example was given by Alan Sokal.
\begin{Example} \label{E1}  
Let us consider an entire function of the form $f(z) = \sum_{k=0}^\infty
a_k z^k$ with $a_0 = a_1 =1,$ $a_k =\frac{1}{q_2^{k-1}q_3^{k-2}
\cdot \ldots \cdot q_{k-1}^2 q_{k}}$ for $k\geq 2,$ where
$(q_k)_{k=2}^\infty$ are arbitrary parameters 
under the following conditions: $q_k \geq 4$ for all $k.$
Suppose that  $(b_k)_{k=0}^\infty$   is  any $ \mathrm{TP}_\infty-$sequence. 
Since every $\mathrm{TP}_\infty-$sequence is, in particular, $2-$times positive sequence, we have
$\frac{b_{n-1}^2}{b_{n-2}b_n}\geq 1.$
So, for an entire function $ (A\ast B) (z) =\sum_{k=0}^\infty 
a_k b_k z^k $ the second quotient $\frac{(a_{n-1} b_{n-1})^2}{(a_{n-2} b_{n-2)}
(a_n b_n)} = \frac{a_{n-1}^2}{a_{n-2}a_n} \cdot 
\frac{b_{n-1}^2}{b_{n-2}b_n}\geq 4$ for all $n\geq 2.$  
Thus, using Theorem~E by Hutchinson, we get
$ (A\ast B)(z) \in \mathrm{TP}_\infty.   $
\end{Example}

Suppose that $A(z) =\sum_{k=0}^\infty a_k z^k$  is a generating function of a 
$\mathrm{TP}_\infty$-preserver.  For a given $l \in \mathbb{N}\cup \{0\}$ we consider 
the sequence ${\mathbf B} = (b_k)_{k=0}^\infty$  such that $b_0=b_1=\cdots=b_{l-1}=0$ 
and $b_k =1, \ k\geq l.$ Then by Theorem ASWE its generating function $B(z)=
\sum_{k=l}^{\infty}z^k=\frac{z^l}{1-z} \in \widetilde{\mathrm{TP}}_\infty.$ We conclude that
$\left(A\ast B\right)(z)=\sum_{k=l}^{\infty} a_k z^k \in \widetilde{\mathrm{TP}}_\infty.   $
Thus, the following statement is valid.

 \begin{Theorem} If $A(z) =\sum_{k=0}^\infty a_k z^k$  is a 
generating function of a $\mathrm{TP}_\infty$-preserver, then for every 
$l \in \mathbb{N}\cup\{0\}$ we have
$$R_l[A](z) =  \sum_{k=l}^{\infty} a_k z^k \in \widetilde{\mathrm{TP}}_\infty.$$
\end{Theorem}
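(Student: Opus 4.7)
The plan is to apply the $\mathrm{TP}_\infty$-preserver property of $A$ to a single, carefully chosen test sequence $\mathbf{B}$, and then read off the conclusion from the resulting generating function.

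First I would define $\mathbf{B}=(b_k)_{k=0}^\infty$ by $b_0=b_1=\cdots=b_{l-1}=0$ and $b_k=1$ for $k\ge l$. Its generating function is
\begin{equation*}
B(z)=\sum_{k=l}^\infty z^k=\frac{z^l}{1-z}.
\end{equation*}
This fits the ASWE canonical form (\ref{aswe}) with $C=1$, $q=l$, $\gamma=0$, a single factor $\beta_1=1$ in the denominator, and no other $\alpha_k$ or $\beta_k$. Hence $\mathbf{B}\in\mathrm{TP}_\infty$ by the ASWE theorem.

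Next I would invoke the hypothesis that $A$ is a $\mathrm{TP}_\infty$-preserver. By definition of $\Lambda_{\mathbf{A}}$, the termwise product $\Lambda_{\mathbf{A}}(\mathbf{B})=(a_k b_k)_{k=0}^\infty$ lies in $\mathrm{TP}_\infty$. But by construction $a_k b_k=0$ for $k<l$ and $a_k b_k=a_k$ for $k\ge l$, so the generating function of this sequence is exactly
\begin{equation*}
\sum_{k=0}^\infty (a_k b_k)z^k=\sum_{k=l}^\infty a_k z^k=R_l[A](z),
\end{equation*}
which therefore belongs to $\widetilde{\mathrm{TP}}_\infty$.

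There is really no obstacle here: the entire content of the statement is encapsulated in the fact that $z^l/(1-z)$ is a valid ASWE function, which is immediate. The only thing worth double-checking is that the convolution operator $\Lambda_{\mathbf{A}}$ really outputs the termwise product (not a genuine convolution), so that zeroing out the first $l$ entries of $\mathbf{B}$ truncates $A$ at index $l$ rather than shifting it; this is exactly what the definition of $\Lambda_{\mathbf{A}}$ in the paper provides. The argument applies uniformly for every $l\in\mathbb{N}\cup\{0\}$, which completes the proof.
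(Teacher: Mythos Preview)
Your proof is correct and follows essentially the same approach as the paper: both choose the test sequence $\mathbf{B}$ with $b_k=0$ for $k<l$ and $b_k=1$ for $k\ge l$, verify via the ASWE theorem that its generating function $z^l/(1-z)$ lies in $\widetilde{\mathrm{TP}}_\infty$, and then read off $R_l[A]$ as the generating function of the termwise product $\Lambda_{\mathbf A}(\mathbf B)$.
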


In \cite{kv} the following conjecture was formulated.

\begin{Conjecture} \label{C1}  
 Let ${\mathbf A} =(a_k)_{k=0}^\infty$ be a non-negative sequence whose 
 generating function is entire. Then this sequence is a $\mathrm{TP}_\infty$-preserver if and only 
if for every $l \in \mathbb{N} \cup \{0\}$ a formal power series $\sum_{k=l}^\infty
a_k z^k$ is an entire function from the $ \mathcal{L-P} I$ class (in particular, it has only 
real nonpositive zeros). 
\end{Conjecture}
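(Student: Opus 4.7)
The plan is to address the two implications of the conjecture separately. The ``only if'' direction is essentially free: the unnumbered theorem just preceding Conjecture~\ref{C1} asserts that if $\mathbf{A}$ is a $\mathrm{TP}_\infty$-preserver then every tail $R_l[A]$ lies in $\widetilde{\mathrm{TP}}_\infty$. Since $A$ is entire, so is each $R_l[A]$, and applying Theorem~ASWE to an entire function in $\widetilde{\mathrm{TP}}_\infty$ forces all poles $(1-\beta_k z)^{-1}$ to be absent in~(\ref{aswe}), leaving exactly the $\mathcal{L-P}I$ representation~(\ref{lpc1}).

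The converse is the substantive direction. Fix $A$ with $R_l[A]\in\mathcal{L-P}I$ for every $l\ge 0$, and let $B\in\widetilde{\mathrm{TP}}_\infty$ be arbitrary. My approach would combine approximation with a reduction to elementary factors. Write $B=Cz^q e^{\gamma z}\prod_k(1+\alpha_k z)/(1-\beta_k z)$ via Theorem~ASWE and approximate it by the truncations $B_N$ in which the infinite product is cut at $N$ factors and $e^{\gamma z}$ is replaced by a Taylor section of high order. Each $B_N$ lies in $\widetilde{\mathrm{TP}}_\infty$ and $B_N\to B$ coefficientwise. Since total positivity of a sequence is a coefficientwise closed condition (each minor is a finite polynomial in the entries), it suffices to prove $\Lambda_{\mathbf A}(\mathbf B_N)\in\mathrm{TP}_\infty$ for each truncation and then pass to the limit. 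For each truncation, I would try to induct on the number of elementary factors.

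Three of the four types of elementary factors are almost immediate. The factor $Cz^q$ shifts the sequence by $q$ positions and rescales, clearly preserving the $\mathrm{TP}_\infty$ property. Hadamard product with $(1-\beta z)^{-1}$ merely replaces $A(z)$ by $A(\beta z)$, under which each tail $R_l[A]$ maps to $R_l[A](\beta z)$, still in $\mathcal{L-P}I$. Hadamard product with $1+\alpha z$ yields the degree-one polynomial $a_0+a_1\alpha z$ with nonnegative coefficients, trivially in $\widetilde{\mathrm{TP}}_\infty$. The substantive case is $e^{\gamma z}$, where the Hadamard product is $\sum a_k\gamma^k z^k/k!$; here one would invoke Theorem~C of P\'olya--Schur, using that $(\gamma^k/k!)$ is a multiplier sequence, to try to deduce that each tail $R_l$ of the resulting function inherits the $\mathcal{L-P}I$ structure from the corresponding tail of $A$.

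The main obstacle is that the Hadamard product is not multiplicative in $B$: if $B=B_1B_2$ with $B_i\in\widetilde{\mathrm{TP}}_\infty$, one does not have $A\ast B=(A\ast B_1)(A\ast B_2)$ or any simple substitute, so the naive induction on the number of elementary factors collapses. Any successful proof must therefore handle the interaction of factors simultaneously, most plausibly by a direct minors computation that exploits the $\mathcal{L-P}I$ structure of every tail $R_l[A]$ at once, or by a more delicate approximation that clusters factors in a way compatible with the coefficientwise Hadamard convolution. A secondary difficulty is controlling how the zeros of the tails $R_l[\Lambda_{\mathbf A}(\mathbf B_N)]$ behave as $N\to\infty$: a priori they could migrate off the negative real axis in the limit, and ruling this out likely requires a quantitative refinement of the $\mathcal{L-P}I$ hypothesis. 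These two obstacles are in my view precisely the reason the conjecture has remained open.
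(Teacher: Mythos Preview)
The statement you are addressing is labeled \emph{Conjecture}~\ref{C1} in the paper, and the paper does not supply a proof of it. Immediately after stating the conjecture the authors write that it was proved in \cite{kv} only in the special cases when $A$ is a polynomial of degree $2$, $3$ or $4$; the general case is left open. So there is no ``paper's own proof'' to compare against.

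Your treatment of the ``only if'' direction is correct and matches exactly what the paper establishes: the unnumbered theorem preceding Conjecture~\ref{C1} gives $R_l[A]\in\widetilde{\mathrm{TP}}_\infty$ for every $l$, and since $A$ is entire each $R_l[A]$ is entire, whence Theorem~ASWE forces the $\mathcal{L-P}I$ form. This direction is not in dispute.

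For the ``if'' direction, your outline does not constitute a proof, and you say so yourself. The obstacle you isolate is the right one: the Hadamard product $A\ast B$ is linear in each argument but not multiplicative, so factoring $B$ via ASWE does not reduce the problem to elementary factors. Your attempted induction on the number of factors of $B_N$ therefore has no inductive step. The remark about $e^{\gamma z}$ is also incomplete: applying the multiplier sequence $(\gamma^k/k!)$ to $A$ yields a function in $\mathcal{L-P}I$, but you would need the stronger conclusion that \emph{every tail} of the result is again in $\mathcal{L-P}I$, which Theorem~C does not give (and which is essentially the conjecture itself restated for a particular $B$). In short, your proposal is an honest sketch of why the problem is hard rather than a proof, and that is consistent with the paper's own assessment that the conjecture remains open.
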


In \cite{kv} this conjecture was proved for the cases when $A$ is a polynomial of 
degree $2, 3$ or $4.$

We note that entire functions  whose Taylor sections have only real zeros have
been studied in various works (see, for example, \cite{kosshap} or \cite{klv1}), 
while entire functions whose remainders have only real zeros have been studied 
less (some results could be found in the very interesting survey \cite{iv}).
We mention here a way to construct such a function. 

\begin{Definition}
The entire function $g_a(z) =\sum _{j=0}^{\infty}  \frac{z^j}{ a^{j(j-1)}}$, $a>1,$
is called the \textit{partial theta-function}. (Note that sometimes the function 
$g_a(\frac{z}{a}) =\sum _{j=0}^{\infty} 
 \frac{z^j}{ a^{j^2}}$, $a>1,$  is called the partial theta function).
\end{Definition}
The function $g_a$ has an interesting property: for all $ n=2, 3, \ldots$ its second 
quotients $q_n(g_a)=a^2.$

The  survey \cite{War} by S.~O.~Warnaar 
contains the history of investigation of the partial theta-function and some of its 
main properties.  

The paper \cite{klv} answers to the question:  for which 
$a>1$ the function $g_a$ belongs to the class $\mathcal{L-P} I. $ In particular,
in \cite{klv} the following statement is proved.

{\bf Theorem F} (O.M.~Katkova, T.~Lobova, A.M.~Vishnyakova, \cite{klv}). { \it  
There exists a constant $q_\infty \approx 3{.}23363666 \ldots,  $
such that $g_a \in \mathcal{L-P} I $ if and only if $a^2 \geq q_\infty.$  }

In \cite{ngthv1} the following theorem is proved. Let  $f(z) = \sum_{k=0}^\infty
a_k z^k$ with $a_0 = a_1 =1,$ $a_k =\frac{1}{q_2^{k-1}q_3^{k-2}
\cdot \ldots \cdot q_{k-1}^2 q_{k}}$ for $k\geq 2,$ where $(q_k)_{k=2}^\infty$ 
is a sequence of arbitrary parameters under the conditions:
$q_2 \geq q_3 \geq q_4 \geq \ldots,$ and $\lim_{n\to\infty}q_n \geq q_\infty.$
Then $f \in \mathcal{L-P} I.$ Using this theorem, we conclude that such entire
function $f$ has all remainders with only real zeros.

Our next result is the following theorem.

\begin{Theorem} \label{Th1}
 Let $f(z) = \sum_{k=0}^\infty a_k z^k$ be an entire function
with positive coefficients such that for every $l \in \mathbb{N}\cup \{0\} $ 
the remainder $R_{l}[f]= \sum_{k=l}^\infty
a_k z^k$ belongs to the $  \mathcal{L-P} I-$class. 
Then $ q_n(f) > 3$ for all  $  n \geq 2.$
\end{Theorem}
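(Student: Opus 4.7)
The strategy is a reduction followed by an iterative bootstrap. Fix $n\ge 2$ and set $h_l(z):=\sum_{k\ge 0}a_{k+l}z^k$, so that $q_n(f)=q_2(h_{n-2})$. Since $R_l[f]=z^l h_l\in\mathcal{L-P}I$ and $h_l(0)=a_l>0$, each $h_l$ itself lies in $\mathcal{L-P}I$, and these remainders satisfy $h_l(z)=a_l+z\,h_{l+1}(z)$. So the theorem reduces to the local statement: if $\varphi,\psi\in\mathcal{L-P}I$ have positive Taylor coefficients with $\varphi(z)=\varphi(0)+z\psi(z)$, and this condition persists through every further tail $\psi_2=(\psi-\psi(0))/z,\psi_3,\ldots$, then $q_2(\varphi)>3$. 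Expanding $\log(\varphi/c_0)$ via the Hadamard factorization $\varphi(z)=c_0 e^{\beta z}\prod_k(1+z/x_k)$ gives $c_1/c_0=S$ and $c_2/c_0=(S^2-T)/2$, where $S:=\beta+\sum 1/x_k$ and $T:=\sum 1/x_k^2$, so $q_2(\varphi)=2S^2/(S^2-T)$. Writing $u:=T/S^2$, the target $q_2(\varphi)>3$ is equivalent to $u>1/3$. Matching the analogous expansion for $\psi(z)=c_1 e^{\gamma z}\prod_j(1+z/y_j)$ with the relation $\varphi-c_0=z\psi$ yields
$$S_\psi=\frac{S^2-T}{2S},\qquad T_\psi=\frac{(S^2-T)^2}{4S^2}+T-\frac{S^2}{3}-\frac{2U}{3S},\qquad U:=\textstyle\sum 1/x_k^3.$$

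The central step is an iterative bootstrap. Suppose inductively that $q_2(\psi)\ge Q$ for some $Q\ge 1$; equivalently, $T_\psi\ge S_\psi^2(Q-2)/Q$. Substituting the identity above and normalizing by $v:=U/S^3$ gives $3(1-u)^2\ge Q(2+4v-6u)$. Combined with the Cauchy--Schwarz inequality $T^2\le S\cdot U$ (i.e.\ $v\ge u^2$, which follows by taking $a_k=x_k^{-1/2},b_k=x_k^{-3/2}$), this reduces to the quadratic constraint $(4Q-3)u^2-(6Q-6)u+(2Q-3)\le 0$. Its discriminant simplifies to $4Q^2$, so the roots are $u=1$ and $u=(2Q-3)/(4Q-3)$; hence $u\ge(2Q-3)/(4Q-3)$, which rearranges to the improved bound
$$q_2(\varphi)\;\ge\;4-\frac{3}{Q}.$$
Applying this bootstrap uniformly to the entire chain $h_l,h_{l+1},h_{l+2},\dots$ and starting from the Laguerre baseline $Q_0=2$ (which holds for every tail), the iteration $Q_{n+1}=4-3/Q_n$ produces the monotone sequence $2,\tfrac{5}{2},\tfrac{14}{5},\tfrac{41}{14},\dots$ converging to the stable fixed point $Q=3$ (the other root $Q=1$ is repelling). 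Hence $q_2(\varphi)\ge 3$.

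For the strict inequality, suppose $q_2(\varphi)=3$, i.e.\ $u=1/3$. Tracing the inequalities above at $u=1/3,\,Q=3$ forces equality in Cauchy--Schwarz, $v=u^2=1/9$, which occurs only when $\beta=0$ and all the $x_k$ coincide. Then $\varphi(z)=c_0(1+z/x)^n$ with $1/n=u=1/3$, so $n=3$; but then $\psi(z)=(c_0/x)(3+3z/x+z^2/x^2)$ has discriminant $9-12=-3<0$, giving a complex-conjugate pair of zeros, which contradicts $\psi\in\mathcal{L-P}I$. Thus $q_2(\varphi)>3$. The main obstacle is the bookkeeping in the iterative step -- in particular, verifying the identity for $T_\psi$ and recognizing that Cauchy--Schwarz $v\ge u^2$ is exactly the sharp ingredient needed so that the iteration converges to the correct fixed point $Q=3$. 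Once this is in place, the monotone convergence and the explicit exclusion of the extremal cubic $(1+z/x)^3$ complete the proof.
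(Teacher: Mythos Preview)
Your proof is correct and rests on the same core inequality as the paper---namely $q_3(\varphi)\bigl(q_2(\varphi)-4\bigr)+3\ge0$ for any $\varphi\in\mathcal{L-P} I$ with $\varphi(0)>0$, obtained via Cauchy--Schwarz on the power sums $\sum x_k^{-j}$---but you organize the iteration differently. The paper (its Lemma~\ref{ll1}) argues by contradiction: if $q_2\le 3$ then $q_3\le 3/(4-q_2)\le q_2$, and applying the lemma to every remainder produces a decreasing chain $\cdots\le q_s\le\cdots\le q_3\le q_2\le 3$; the limit $q$ satisfies $q(q-4)+3\ge0$ and $q\ge2$, forcing $q\ge3$, so every $q_n=3$ and $f$ is the partial theta function $g_{\sqrt3}$, which is then excluded by appealing to the external Theorem~F. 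You instead rewrite the same inequality as $q_2(\varphi)\ge 4-3/q_2(\psi)$ (noting $q_3(\varphi)=q_2(\psi)$) and iterate \emph{upward} from the uniform baseline $q_2(h_l)\ge2$, obtaining the increasing sequence $2,\,5/2,\,14/5,\ldots\to3$. For the strict inequality, your argument is local and self-contained: equality at $u=1/3$, $Q=3$ forces equality in Cauchy--Schwarz together with $\beta=0$, pinning down $\varphi=c_0(1+z/x)^3$, whose first tail $3+3z/x+z^2/x^2$ has negative discriminant and hence non-real zeros. This buys you independence from Theorem~F (on the partial theta function) at the cost of somewhat heavier coefficient bookkeeping in the identity for $T_\psi$; the paper's route is shorter once that lemma is granted but relies on a non-elementary result about $g_{\sqrt3}$.
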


We need further the following theorem by I.V.~Ostrovskii.

{\bf Theorem G} (I.V.~Ostrovskii, \cite{iv1}). {\it Let $f(z)=\sum_{k=0}^{\infty} a_k z^k$ 
be a power series with infinite radius 
of convergence. If there exist two nonnegative integers $n_1$ and $n_2, $ 
$n_1 \ne n_2,$ such that the remainders $R_{n_1}[f](z)$ and $R_{n_2}[f](z)$ 
have only real non-positive zeros, then
\begin{equation}
\label{est} \log M(r,f)=O(\sqrt{r}),  \ r\to \infty,
\end{equation}
where, as usual, $M(r,f)= \max_{|z| \leq r} |f(z)|, r\geq 0.$ }

The estimation (\ref{est}) implies that the order of growth of a function $f$ is not greater
than $\frac{1}{2}.$ For a function with order of growth less than $1$ the following 
statements are equivalent: that a function has only real nonpositive zeros, and that 
a function belongs to the class  $ \mathcal{L-P} I. $  It follows 
from Ostrovskii's theorem that if a power series $f$ and all its remainders $R_{l}[f],\ l=1,2,\ldots,$ 
have only real non-positive zeros, then $R_{l}[f]\in  \mathcal{L-P} I, \ l=0,1,2\ldots$  Thus, our theorem 
can be equivalently reformulated as follows.

\begin{Theorem} \label{Th2} 
 Let $f(z) = \sum_{k=0}^\infty a_k z^k$ be an entire function
 such that for every $l \in \mathbb{N}\cup \{0\} $ the function $R_{l}[f]= \sum_{k=l}^\infty
a_k z^k$ have only real non-positive zeros.
Then  $ q_n(f)  > 3, \  n=  2, 3,  \ldots.$
\end{Theorem}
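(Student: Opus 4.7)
The plan is to reduce the global claim to a three-coefficient inequality about a single $\mathcal{L-P} I$-function and then attack it by contradiction. Set $h_l(z):=R_l[f](z)/z^l=\sum_{k\ge 0}a_{l+k}z^k$; by hypothesis every $h_l$ lies in $\mathcal{L-P} I$, and one checks directly that $q_n(f)=q_2(h_{n-2})$. So it suffices to prove the following local statement: \emph{if $h$ is entire with positive Taylor coefficients $b_0,b_1,b_2,\dots$ and both $h$ and $h_1(z):=(h(z)-b_0)/z$ belong to $\mathcal{L-P} I$, then $b_1^2>3\,b_0 b_2$.} After the normalization $b_0=b_1=1$, the statement becomes $b_2<1/3$, and, writing the Hadamard factorization $h(z)=e^{\beta z}\prod_j(1+z/x_j)$ and differentiating $\log h$ at $0$, this is equivalent to
\[
\sum_{j}\frac{1}{x_j^2}>\frac{1}{3}\quad\text{given}\quad\beta+\sum_j\frac{1}{x_j}=1.
\]

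As a warm-up and source of the constant $3$, consider the cubic polynomial $h=\prod_{j=1}^3(1+z/x_j)$. Maclaurin's inequality gives $b_1^2\ge 3\,b_0 b_2$, with equality iff $x_1=x_2=x_3=:x$; but in that equality case $h=(1+z/x)^3$ and $h_1(z)=3/x+(3/x^2)z+(1/x^3)z^2$ has discriminant $9/x^4-12/x^4<0$, so $h_1\notin\mathcal{L-P} I$. Hence $b_1^2>3\,b_0 b_2$ strictly for any cubic $h$ satisfying our hypothesis.

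For the general case I would argue by contradiction. First rule out $\beta>0$: the toy case $h=e^{\beta z}$ gives $(h-1)/z$ with purely imaginary zeros $2\pi ik/\beta$, and the general case $h=e^{\beta z}P(z)$ is handled by a transcendental argument on $e^{\beta z}P(z)=b_0$ showing that infinitely many complex solutions appear, so $h_1\notin\mathcal{L-P} I$. This reduces us to $\beta=0$ with $\sum 1/x_j=1$ and $\sum 1/x_j^2\le 1/3$; Cauchy--Schwarz then forces at least three zeros, and the three-zero extremal case coincides with the forbidden cubic above. The main obstacle, where I expect the real work of the proof to lie, is treating the remaining configurations with more than three zeros or infinitely many zeros. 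Here the polynomial Maclaurin bound $q_2\ge 2n/(n-1)$ degenerates to $2$ as the degree grows, so the strict separation from $3$ must be extracted from the simultaneous $\mathcal{L-P} I$-constraints on $h,h_1,h_2,\dots$. My plan would be to combine Laguerre inequalities at each shifted level with a higher-order Polya--Tur\'an inequality such as $4(b_1^2-b_0 b_2)(b_2^2-b_1 b_3)\ge(b_1 b_2-b_0 b_3)^2$ applied simultaneously to $h$ and $h_1$, and to propagate a contradiction by an iterative argument; identifying the precise analytic identity that closes the loop is the decisive step.
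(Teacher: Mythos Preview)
Your proposal has a genuine gap: you never complete the argument, and you say so yourself (``identifying the precise analytic identity that closes the loop is the decisive step''). More seriously, your reduction is to a \emph{two-shift} local statement --- $h,h_1\in\mathcal{L-P}I\Rightarrow q_2(h)>3$ --- which is strictly stronger than what the theorem requires and whose truth is itself not obvious. Your proposed endgame (``iterate Laguerre/Tur\'an inequalities at each shifted level'') cannot propagate more than two steps if you only assume $h,h_1\in\mathcal{L-P}I$, so the iteration you have in mind does not get off the ground under that hypothesis. The argument ruling out $\beta>0$ via ``a transcendental argument on $e^{\beta z}P(z)=b_0$'' is also hand-wavy when $h$ has infinitely many zeros; in the paper this step is handled cleanly by Ostrovskii's Theorem~G.

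The paper's proof is rather different and uses the \emph{full} hypothesis that every $h_l\in\mathcal{L-P}I$. The key analytic identity you are groping for is a single three-coefficient inequality: applying Cauchy--Schwarz to $\bigl(\sum x_k^{-1}\bigr)\bigl(\sum x_k^{-3}\bigr)\ge\bigl(\sum x_k^{-2}\bigr)^2$ for the zeros of an $\mathcal{L-P}I$ function yields
\[
q_3(q_2-4)+3\;\ge\;0.
\]
Applied to each shifted function $h_l$ this becomes $q_{l+2}(q_{l+1}-4)+3\ge 0$. Now assume, for contradiction, that $q_2\le 3$. Then $q_3\le 3/(4-q_2)\le q_2\le 3$, and iterating gives a decreasing chain $q_2\ge q_3\ge q_4\ge\cdots$ bounded below by $2$. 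Its limit $q$ satisfies $q(q-4)+3\ge 0$ and $q\ge 2$, hence $q\ge 3$, which forces $q_l=3$ for \emph{all} $l$. Thus $f$ is the partial theta function $g_{\sqrt{3}}$, and the proof finishes by invoking the external result (Theorem~F, from \cite{klv}) that $g_a\in\mathcal{L-P}I$ iff $a^2\ge q_\infty\approx 3.2336$; in particular $g_{\sqrt{3}}\notin\mathcal{L-P}I$, a contradiction. The two ingredients you are missing are exactly this monotone-limit argument across \emph{all} shifts, and the identification of the extremal case with the partial theta function together with the fact that it lies outside $\mathcal{L-P}I$.
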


In connection with the above theorem, we formulate the following open problem.

\begin{Problem}  
Let us consider the set $\mathcal{R}$ of all entire functions  
$f(z) = \sum_{k=0}^\infty a_k z^k$  such that for every $l \in \mathbb{N}\cup \{0\} $ 
the function $R_l[f](z)=\sum_{k=l}^\infty a_k z^k$ has only real non-positive zeros. 
Find the following constant 
$$c = \inf \left\{ q_n(f)   \left|  \right. f\in \mathcal{R},
   n= 2, 3, \ldots \right\}.$$
\end{Problem}

\begin{Conjecture} 
In the previous problem $c=q_\infty.$
\end{Conjecture}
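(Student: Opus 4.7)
The plan is to prove the two inequalities $c \leq q_\infty$ and $c \geq q_\infty$ separately, with the first being essentially automatic and the second being the substantive part.

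For the upper bound, I would exhibit the partial theta-function $g_a$ at the critical parameter $a^2 = q_\infty$ as a witness. By Theorem~F, $g_a \in \mathcal{L-P}I$ as soon as $a^2 \geq q_\infty$, and a direct re-indexing $k = j+l$ in the defining series gives the identity
\begin{equation*}
R_l[g_a](z) \;=\; a^{-l(l-1)}\, z^l\, g_a\!\left(z\, a^{-2l}\right), \qquad l \geq 0,
\end{equation*}
so every remainder of $g_a$ is (up to a power of $z$ and a scaling of the variable) the function $g_a$ itself, and therefore also lies in $\mathcal{L-P}I$. Hence $g_a \in \mathcal{R}$ for $a^2 \geq q_\infty$, and since $q_n(g_a) = a^2$ for all $n \geq 2$, letting $a^2 \downarrow q_\infty$ yields $c \leq q_\infty$.

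For the lower bound I would use a compactness / extremal-configuration argument. The key structural observation is that $\mathcal{R}$ is invariant under the coefficient-shift $f \mapsto \widehat{f}_l := z^{-l} R_l[f]$, because the $m$-th remainder of $\widehat{f}_l$ is just $z^{-l} R_{l+m}[f]$, and this shift acts on second quotients by $q_n(\widehat{f}_l) = q_{n+l}(f)$. Consequently, if some $q_{n_0}(f) = q^{\star}$, a shift together with a rescaling $z \mapsto \lambda z$ reduces to the case $a_0 = a_1 = 1$ and $q_2(f) = q^{\star}$. I would then pick a minimizing sequence $f^{(j)} \in \mathcal{R}$ of such normalized functions with $q_2(f^{(j)}) \to c$, use (\ref{defq}) to recover the Taylor coefficients from $(q_n(f^{(j)}))_{n \geq 2}$, and extract a locally uniform subsequential limit $f^{\star}$. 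Normal families together with Hurwitz's theorem applied to each truncation $R_l[f^{(j)}]$ would then ensure that every $R_l[f^{\star}]$ still has only real non-positive zeros, so $f^{\star} \in \mathcal{R}$ and $q_2(f^{\star}) = c$.

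The crucial and hardest step is a rigidity assertion: any such minimizer must be (a rescaling of) the partial theta-function $g_a$ with $a^2 = c$, i.e.\ all of its second quotients must equal $c$. Heuristically, if $q_{n_1}(f^{\star}) > c$ for some $n_1$, one should be able to perturb the coefficients so as to decrease $q_{n_1}$ while preserving membership in $\mathcal{R}$, contradicting the minimality of $c$; once the extremizer is identified as a partial theta-function, Theorem~F forces $c = a^2 \geq q_\infty$. Carrying out this perturbation is the main obstacle, and it is essentially equivalent to a quantitatively sharp form of Theorem~\ref{Th2}, strong enough to replace the bound $q_n > 3$ there by $q_n \geq q_\infty \approx 3{.}234$. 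Closing the gap between $3$ and $q_\infty$ appears to require a genuinely new idea, which is presumably why the statement is recorded as a conjecture rather than a theorem.
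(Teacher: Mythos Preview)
This statement is a \emph{conjecture}, and the paper offers no proof of it; it is explicitly left open. Your upper bound $c \leq q_\infty$ is correct: the self-similarity identity you wrote down for the partial theta-function, combined with Theorem~F, shows that $g_a \in \mathcal{R}$ whenever $a^2 \geq q_\infty$, and since $q_n(g_a)=a^2$ for every $n$, taking $a^2 = q_\infty$ yields $c \leq q_\infty$. The paper arrives at the same conclusion more indirectly, via the result from \cite{ngthv1} quoted just before Theorem~\ref{Th1}.

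The reverse inequality $c \geq q_\infty$ is the genuinely open part, and you rightly flag it as such. Your compactness-and-rigidity outline is reasonable in spirit but has a gap even at the schematic level: the perturbation step you describe---decrease $q_{n_1}$ while preserving membership in $\mathcal{R}$---would not by itself contradict the minimality of $c$, since $q_{n_1}(f^\star) > c$ to begin with; what one would actually need is a mechanism by which slack in some $q_{n_1}$ can be traded for a further decrease of $q_2$ below $c$, and none is supplied. Your closing assessment, that bridging the gap between the bound $q_n > 3$ of Theorem~\ref{Th2} and the conjectured $q_n \geq q_\infty \approx 3{.}234$ is precisely the missing ingredient, is accurate, and this is exactly why the paper records the statement as a conjecture rather than a theorem.
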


Our last result is the following theorem.

\begin{Theorem} \label{Th3} 
Let $f(z)=1 + z+\sum_{k=2}^\infty  \frac{z^k}{q_2^{k-1}q_3^{k-2}\cdots q_{k}} 
\in \widetilde{\mathrm{TP}}_\infty.$ Then for each $a,  a^2 \geq 3{.}503,$ the function $$\left(f \ast g_a \right)(z)
=1+z+\sum_{k=2}^\infty  \frac{z^k}{a^{k(k-1)} q_2^{k-1}q_3^{k-2}\cdots q_{k}} \in \widetilde{\mathrm{TP}}_\infty.$$
\end{Theorem}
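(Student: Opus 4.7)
The plan is to show $f\ast g_a \in \widetilde{\mathrm{TP}}_\infty$ by analyzing the second quotients of the coefficient sequence $(c_k) = (a_k/a^{k(k-1)})$ of $f\ast g_a$ and then invoking a refined Hutchinson-type criterion that exploits the extra structure coming from $f \in \widetilde{\mathrm{TP}}_\infty$.

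First I would compute the second quotients of $f\ast g_a$. A direct manipulation using the identity $-2(n-1)(n-2) + (n-2)(n-3) + n(n-1) = 2$ gives
$$q_n(f \ast g_a) = \frac{c_{n-1}^2}{c_{n-2}c_n} = a^2 \cdot q_n(f)$$
for every $n \geq 2$. Because $a^{k(k-1)}$ grows super-exponentially while $a_k$ grows at most exponentially by the ASWE representation of $f$, the series $\sum c_k z^k$ converges for all $z$, so $f\ast g_a$ is entire. Then, by the ASWE theorem again, showing $f\ast g_a \in \widetilde{\mathrm{TP}}_\infty$ is equivalent to showing $f \ast g_a \in \mathcal{L-P} I$, i.e., that it has only real non-positive zeros.

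Next, exploit $f \in \widetilde{\mathrm{TP}}_\infty \subset \mathrm{TP}_2$ to obtain $q_n(f) \geq 1$ for all $n \geq 2$. Combined with the identity above this gives $q_n(f \ast g_a) \geq a^2 \geq 3.503$ for every $n\geq 2$. Since $3.503 < 4$, Hutchinson's Theorem~E cannot be invoked directly; the argument must appeal to a sharpening in which the lower bound on $q_n$ is allowed to drop below $4$ at the cost of an auxiliary constraint. In our situation that constraint is provided by the higher-order total positivity of $(a_k)$: non-negativity of the $3\times 3$ and larger minors of the Toeplitz matrix of $(a_k)$ yields inequalities on products such as $q_n(f)q_{n+1}(f)$ and effectively controls how much the sequence $q_n(f\ast g_a) = a^2 q_n(f)$ may oscillate.

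The main obstacle is precisely this refined Hutchinson-type step and the specific value $3.503$. The bound $q_n(f)\geq 1$ is sharp: for $f(z) = 1/(1-\beta z)$ one has $q_n(f) \equiv 1$ and $f \ast g_a$ is the (rescaled) partial theta function, so by Theorem~F the conclusion already holds for this extremal case as soon as $a^2 \geq q_\infty \approx 3.234$. The gap between $q_\infty$ and $3.503$ must absorb the possible non-monotone behavior of $q_n(f)$ still consistent with $\widetilde{\mathrm{TP}}_\infty$; the value $3.503$ is apparently the constant produced by optimizing the ensuing estimate, in the spirit of the methods of \cite{HishAn} and \cite{ngthv1}. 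Tying the TP-structural inequalities to a Hutchinson-type zero-location argument, and verifying that the delicate numerical constant indeed falls out of the optimization, is the technical heart of the proof.
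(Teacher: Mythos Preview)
Your outline correctly isolates the basic ingredients: $q_n(f\ast g_a)=a^2\,q_n(f)\ge a^2$, Hutchinson's criterion $q_n\ge 4$ is unavailable, and higher-order TP minors must supply the missing constraint. In that sense you are pointed in the same direction as the paper. But the proposal stops exactly where the work begins: you defer the entire ``refined Hutchinson-type step'' to the spirit of \cite{HishAn} and \cite{ngthv1}, and no off-the-shelf theorem of that kind covers the present situation. What you call ``the technical heart'' is, in fact, the whole proof.

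Concretely, the paper does not invoke a general criterion. It shows that suitable partial sums $S_n^q(x,a)$ of $(f\ast g_a)(-x)$ change sign at an explicit sequence of test points
\[
\hat x_m \;=\; a^{2m-3}\,q_2 q_3\cdots q_{m-1}\sqrt{q_m},\qquad m=3,\dots,n,
\]
together with a point $x_0\in(1,a^2)$ coming from $S_4$; the intermediate value theorem then forces all zeros to be real. The TP input is not a bound on products $q_n q_{n+1}$ but the specific $3\times 3$ minor inequality
\[
\Delta_3^k \;=\; 1-\frac{2}{q_k}+\frac{1}{q_k^2}\Bigl(\frac{1}{q_{k-1}}+\frac{1}{q_{k+1}}\Bigr)-\frac{1}{q_{k-1}q_k^2 q_{k+1}}\;\ge\;0,
\]
which, in the dangerous regime $q_m<4/a^2$, pins down $\frac{1}{q_{m-1}}+\frac{1}{q_{m+1}}$ from below. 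A case split on whether $q_m\ge 4/a^2$ infinitely often then yields two numerical thresholds, $a^2\gtrsim 3.411$ and $a\ge 1.87152$ (i.e.\ $a^2\gtrsim 3.503$), the latter arising as the root of the concrete inequality $1-\tfrac{2}{a}+\tfrac{1}{4a^2}-\tfrac{1}{16a^5}>0$. The constant $3.503$ is thus not the outcome of an abstract optimization but of this explicit one-variable estimate at the boundary test point $\hat x_n$. Your proposal contains none of this machinery, so as written it is an accurate plan but not a proof.
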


\section{Proof of Theorem~\ref{St1} }

Let us prove necessity. Since $(b_k)_{k=0}^\infty \in \mathrm{TP}_\infty,$ we have by theorem ASWE 
$$
B(z) = \sum_{k=0}^\infty b_k z^k =C z^q e^{\gamma z}\prod_{k=1}^\infty \frac{ (1+\alpha_kz)}{(1-\beta_kz)},
$$
where $C\ge 0, q \in \mathbb{Z},
\gamma\ge 0,\alpha_k\ge 0,\beta_k\ge 0, \sum_{k=1}^\infty(\alpha_k+\beta_k)
<\infty.$
If $B$ is entire function then the necessary condition is fulfilled. Now let $B$ is not an entire function,
so it has poles. Suppose that  $B $  has at least 2 different positive poles. Since $B$ does not 
have positive zeros, then $B^\prime$ has a positive zero (between poles), that is impossible. Thus,
$B$  has one (maybe, multiple) pole, and we have 
\begin{equation}
\label{e1+}
B(z) =C z^q e^{\gamma  z}  \frac{\prod_{k=1}^\infty(1+\alpha_k z)}{(1-\beta z)^m}
=: \frac{F(z)}{(1-\beta z)^m} ,
\end{equation}
were $C >0, q\in \mathbb{N} \cup \{0\}, \gamma \geq 0,$ $\alpha_k \geq 0, \beta > 0,$
$\sum_{k=1}^\infty \alpha_k  < \infty. $

Since $F$ is an entire function with non-negative Taylor coefficients,
we have $M(r, F) = \max_{|z| \leq r} |F(z)| =F(r).$ If $F$ is not a nonnegative constant,
then $\lim_{x\to +\infty} F(x) = + \infty.$ If $\lim_{x\to +\infty} 
\frac{F(x)}{(1-\beta x)^m} = + \infty,$ then $B^\prime$ has a positive zero
on $(\beta, +\infty),$ that is impossible. We conclude that
\begin{equation}
\label{e2+}
B(z) =   \frac{C z^q \prod_{k=1}^r(1+\alpha_k z)}{(1-\beta z)^m}
=: \frac{P(z)}{(1-\beta z)^m} ,
\end{equation}
were $C >0, q\in \mathbb{N} \cup \{0\}, $ $\alpha_k \geq 0, \beta > 0,$
$r \in \mathbb{N}\cup \{0\}, q + r\leq m. $
The necessity is proved.

Let us prove sufficiency. If $B(z) = \sum_{k=0}^\infty b_k z^k $ is an entire
function, then by theorem ASWE this function belongs to the  class  
$ \mathcal{L-P} I,$ and thus its derivative $z B^\prime (z) = 
\sum_{k=0}^\infty k b_k z^k $ also belongs to the  class  $ \mathcal{L-P} I.$ 
Thus, in this case the sufficient condition is fulfilled.\\ 

Suppose that
$B(z) = (1-\beta  z)^{-m}P(z),$ where $ \beta > 0,$ $m \in \mathbb{N},$ the polynomial 
$P(z)=\sum_{k=0}^{n} c_k z^k ,\  c_k\geq 0,$ has only non-positive zeros, and $\deg P \leq m.$ Then 

\begin{equation}
\label{B}
B^\prime(z) = \frac{P^\prime(z) (1-\beta  z)+m \beta P(z)}{(1-\beta  z)^{m+1}} 
=: \frac{Q(z)}{ (1-\beta  z)^{m+1}}.
\end{equation}
Obviously, 
\begin{equation}
\label{Q}
Q(z)=\beta(m-n)c_n z^n+\sum_{k=0}^{n-1}((k+1)c_{k+1}+\beta (m-k) c_k) z^k
\end{equation}
is a polynomial with non-negative coefficients, and $\deg Q \leq m.$ Since all zeros of $P$ are 
non-positive while $\beta>0,$ due to Rolle's Theorem $Q$ has $n-1$ zeros in the interval between 
the smallest and the biggest zero of $P,$ that is, $Q$ has $n-1$ non-positive zeros. In the case when 
$m=n,$ by virtue of (\ref{Q}) we have $\deg Q=n-1.$ Thus, in this case all zeros of $Q$ are non-positive. 
If $m>n,$ then 
\[
\lim_{x\to -\infty}B(z)=\lim_{x\to -\infty}\frac{P(z)}{(1-\beta z)^m}=0
\]
Therefore, $Q$ has one more negative zero in the infinite interval whose right endpoint is the smallest root 
of $P$. We have proved that all zeros of $Q$ are real and non-positive.
Theorem~\ref{St1} is proved.  $\Box$

\section{Proof of Theorems~\ref{Th1} and~\ref{Th2}  }

We need the following lemma from \cite{ngthv2}.

\begin{Lemma}(\cite[Lemma~2.1]{ngthv2}) \label{ll1}
 Let $f, \ f(0)>0,$ be an entire function from $ \mathcal{L-P} I.$ Then $q_2 \geq 2$ and
\begin{equation}
\label{inequality}
q_3(q_2-4)+3\geq 0.
\end{equation}
\end{Lemma}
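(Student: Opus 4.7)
\textbf{Proof plan for Lemma~\ref{ll1}.} Since $f\in\mathcal{L-P}I$ and $f(0)>0$, I would invoke the Laguerre--P\'olya factorization~\eqref{lpc1} in the form
\[
f(z)=c\,e^{\beta z}\prod_{k=1}^\infty\!\Bigl(1+\tfrac{z}{x_k}\Bigr),\qquad c=f(0)>0,\ \beta\geq0,\ x_k>0,\ \sum_k \tfrac{1}{x_k}<\infty,
\]
where the absence of the factor $z^n$ is forced by $f(0)>0$. Set $L(z)=\log f(z)$ and write $T:=\beta+S_1$, $S_n:=\sum_k x_k^{-n}$ (all convergent, since $x_k^{-1}$ is summable and hence bounded, so $x_k^{-n}$ is summable for every $n\geq 2$).

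The first step is to compute the first few Taylor coefficients of $f$ from those of $L$. Differentiating the product formula yields $L'(0)=T$, $L''(0)=-S_2$, $L'''(0)=2S_3$. Expanding $f(z)/a_0=\exp\bigl(L(z)-\log c\bigr)$ through order three gives the clean identities
\[
\frac{a_1}{a_0}=T,\qquad \frac{a_2}{a_0}=\frac{T^{2}-S_2}{2},\qquad \frac{a_3}{a_0}=\frac{T^{3}-3TS_2+2S_3}{6}.
\]
From the middle formula, $q_2=\dfrac{a_1^2}{a_0 a_2}=\dfrac{2T^{2}}{T^{2}-S_2}$, and the inequality $q_2\geq 2$ is immediately equivalent to $S_2\geq 0$, which is obvious. (Note $a_2>0$ follows from $a_2/a_0>0$, which is itself a consequence of the real-rootedness, so the denominator is positive.)

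For the second inequality I would just grind the algebra: substituting the three formulas into $q_3(q_2-4)+3$ and clearing denominators, the numerator collapses to
\[
3(T^{2}-S_2)(2S_2-T^{2})+3T(T^{3}-3TS_2+2S_3)=6\bigl(TS_3-S_2^{2}\bigr),
\]
so that
\[
q_3(q_2-4)+3 \;=\; \frac{6\bigl(TS_3-S_2^{2}\bigr)}{T\bigl(T^{3}-3TS_2+2S_3\bigr)} \;=\; \frac{TS_3-S_2^{2}}{T\cdot a_3/a_0}.
\]
The denominator is positive (again $a_3>0$ since all Taylor coefficients of an $\mathcal{L-P}I$ function with $f(0)>0$ and no root at $0$ are positive, up to possibly finitely many trailing zeros that I would dispose of as a trivial degenerate case). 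Thus the target inequality reduces to $TS_3\geq S_2^{2}$, which follows from the Cauchy--Schwarz inequality $S_2^{2}=\bigl(\sum x_k^{-2}\bigr)^{2}\leq \bigl(\sum x_k^{-1}\bigr)\bigl(\sum x_k^{-3}\bigr)=S_1 S_3\leq TS_3$, the last step using $\beta\geq 0$.

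The only real bookkeeping obstacle is making sure none of the denominators vanishes, i.e.\ that $a_2,a_3>0$; the degenerate cases (finitely many factors with a very small product, or $f$ a constant/linear) either make the statement vacuous because $q_2$ or $q_3$ is undefined, or can be handled by a separate one-line check. Everything else is a short symbolic computation plus one application of Cauchy--Schwarz.
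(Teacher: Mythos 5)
Your proposal is correct and takes essentially the same route as the paper: both use the $\mathcal{L-P}I$ factorization to relate the power sums $\beta+S_1$, $S_2$, $S_3$ of the zeros to $a_0,\dots,a_3$ (you merely invert those relations), obtain $q_2\ge 2$ from $S_2\ge 0$, and reduce $q_3(q_2-4)+3\ge 0$ to $S_2^2\le S_1S_3\le (\beta+S_1)S_3$ via the Cauchy--Schwarz inequality and $\beta\ge 0$. The algebraic identity you verify, $a_1^2a_2-4a_0a_2^2+3a_0a_1a_3\ge 0$, is exactly the intermediate inequality in the paper's proof.
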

For the reader's convenience we give the proof.
\begin{proof}
Since $f(z)=\sum_{k=0}^{\infty} \in \mathcal{L-P} I$ and $f(0)>0,$ according to the definition (\ref{lpc1}) 
the function $f$ admits the representation
\begin{equation}
\label{lpc1n}
f(z)=c e^{\beta z} \prod_{k=1}^{\infty}\left(1+\frac{z}{x_k}\right),
\end{equation}
where $c>0,$ $\beta\geq 0,$ $x_k>0,$ and $\sum x_k^{-1}< \infty.$
From the above representation we obtain the following formulas:
\begin{equation}
\label{In1}
\beta +\sum_{k=1}^{\infty}\frac{1}{x_k}=\frac{f'(0)}{f(0)}=\frac{a_1}{a_0}
\end{equation}
\begin{equation}
\label{In2}
\sum_{k=1}^{\infty}\frac{1}{x_k^2}=\frac{(f'(0))^2-f''(0)f(0)}{(f(0))^2}=\frac{a_1^2-2a_2 a_0}{a_0^2}
\end{equation}
and
$$\sum_{k=1}^{\infty}\frac{1}{x_k^3}=\frac{1}{2}\left(\frac{f'''(0)f^2-3f''(0)f'(0)f(0)+2(f'(0))^3}{(f(0))^3}\right)=$$
\begin{equation}
\label{In3}
\frac{3a_3 a_0^2-3a_2a_1a_0+ a_1^3}{a_0^3}
\end{equation}
It follows from (\ref{In2}) that $q_2=\frac{a_1^2}{a_0 a_2}\geq 2.$

According to the Cauchy-Bunyakovsky-Schwarz inequality, we obtain
$$\left(\frac{1}{x_1} + \frac{1}{x_2} + ...\right) \left(\frac{1}{x_1^3} + \frac{1}{x_2^3} + 
\ldots\right) \geq \left(\frac{1}{x_1^2 }+ \frac{1}{x_2^2} + \ldots\right)^2.$$
Therefore, by (\ref{In1}), (\ref{In2}) and (\ref{In3}) we have
$$\left(\frac{a_1}{a_0}-\beta\right)\left(\frac{3a_3 a_0^2-3a_2a_1a_0+ a_1^3}{a_0^3}\right)\geq 
\left(  \frac{a_1^2-2a_2 a_0}{a_0^2} \right)^2$$
Because $\beta \geq0,$ we obtain the inequality
$$\left(\frac{a_1}{a_0}\right)\left(\frac{3a_3 a_0^2-3a_2a_1a_0+ a_1^3}{a_0^3}\right)\geq 
\left(  \frac{a_1^2-2a_2 a_0}{a_0^2} \right)^2$$
or
$$a_1^2a_2  - 4a_0a_2^2+ 3a_0 a_1a_3 \geq 0.  $$

Since $q_2=\frac{a_1^2}{a_0 a_2},$ $q_3=\frac{a_2^2}{a_1 a_3},$ and $q_2 q_3=\frac{a_1 a_2}{a_0 a_3},$ 
the above inequality is equivalent to the following one 
$$\frac{a_1 a_2}{a_0 a_3}-4\frac{a_2^2}{a_1 a_3}+3=q_3(q_2 - 4) + 3 \geq 0.$$
Lemma~\ref{ll1} is proved.  $\Box$

\end{proof}

Assume that $q_2 \leq 3.$ In this case we can rewrite the inequality 
(\ref{inequality}) in the form
\begin{equation}
\label{inequality1}
q_3 \leq \frac{3}{4-q_2}.
\end{equation}
Note that if $q_2\leq 3,$ then 
\begin{equation}
\label{inequality2}
\frac{3}{4-q_2}\leq q_2.
\end{equation}
(The inequality (\ref{inequality2}) is equivalent to $q_2^2-4q_2+3\leq 0$). 

It follows from (\ref{inequality1}) and (\ref{inequality2}) that
\begin{equation}
\label{basic}
q_3\leq q_2 \leq 3.
\end{equation}

Since all  remainders  $R_{l}[f]$ belong to $\mathcal{L-P} I$ for all 
$l \in \mathbb{N}\cup \{0\}, $    Lemma~\ref{ll1} can be rewritten as follows: 
\begin{equation}
\label{inequalitynew}
q_{l+2}(q_{l+1}-4)+3\geq 0
\end{equation}
for all $l=1,2,3,\ldots,$ and the inequality (\ref{basic})  provides the following chain of inequalities
\begin{equation}
\label{chain}
\cdots \leq q_s\leq q_{s-1}\leq \cdots\leq  q_3\leq q_2 \leq 3.
\end{equation}

Therefore, the sequence $(q_n)_{n=2}^\infty$ is decreasing and bounded from below, that is 
the sequence is convergent. Denote by 
$$q=\lim_{l\to \infty} q_l.$$
Taking the limit in (\ref{inequalitynew}) as $n$ approaches infinity, we obtain
$$q(q-4)+3 \geq 0,$$
and, using $q\geq 2,$ we get
$$q\geq 3.$$
By virtue of (\ref{chain}), the last inequality is possible if and only if $q_s(f)=3$ for all $s=2,3,\ldots.$ 
In this case, we obtain a partial theta-function
$$ g_{\sqrt{3}}(z)=\sum_{k=0}^{\infty}\frac{z^k}{(\sqrt{3})^{k(k-1)}}.$$
By \cite[Theorem~F]{klv},  this function doesn't belong to $\mathcal{L-P} I.$
We came to a contradiction. Thus, $q_2>3.$ Because all remainders belong to $\mathcal{L-P} I,$ we have
$q_l>3$ for all $  l=2, 3, \ldots.$

Theorems~\ref{Th1} and~\ref{Th2} are proved. $\Box$

\section{Proof of Theorem~\ref{Th3} }

Suppose $f(z) = \sum_{k=0}^\infty a_k z^k$ is an entire function
with positive coefficients such that for every $l \in \mathbb{N}\cup \{0\} $ 
the remainder $R_{l}[f]= \sum_{k=l}^\infty a_k z^k$ belongs to the 
$  \mathcal{L-P} I-$ class.  Without loss of generality, we can assume 
that $a_0=a_1=1,$ since we can 
consider a function $g(z) =a_0^{-1} f (a_0 a_1^{-1}z) $  instead of 
$f,$ due to the fact that such rescaling of $f$ preserves the property of 
having all real zeros for a function and all its remainders, and preserves the 
second quotients:  $q_n(g) =q_n(f)$ 
for all $n.$ During the proof we use notation  $q_n$ instead of 
 $q_n(f).$ So we can write $ f(z) = 1 + z + \sum_{k=2}^\infty 
\frac{ z^k}{q_2^{k-1} q_3^{k-2} \ldots q_{k-1}^2 q_k}.$ We will also consider
a function  $\varphi(z) = f(-z) = 1 - z + \sum_{k=2}^\infty  \frac{ (-1)^k z^k}
{q_2^{k-1} q_3^{k-2} \ldots q_{k-1}^2 q_k}$  instead of $f.$

Let us denote by 
\begin{equation}
\label{a1}
S_n(x, a)=\sum_{k=0}^n (-1)^k \frac{x^k}{a^{k(k-1)}},
\end{equation}
and by
\begin{equation}
\label{a2}
S_n^q(x, a)=1-x+\sum_{k=2}^n (-1)^k \frac{x^k}{a^{k(k-1)} 
q_2^{k-1}q_3^{k-2}\cdots q_{k}}. 
\end{equation}

The following lemma can be proved by direct calculation.

\begin{Lemma} (see \cite[Lemma 2.4]{HishAn}, c.f. \cite[Lemma 2]{klv}). \label{ll2}  
If $a^2 \geq 1+\sqrt{5},$ then $S_4(x,a)$ has two real zeros 
in the interval $(1, a^2).$
\end{Lemma}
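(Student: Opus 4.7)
The threshold $1+\sqrt{5}$ is crying out to be explained, and the plan is to extract it from a palindromic reduction. After the substitution $x=a^3 z$ one checks directly that
\[
a^{12}\,S_4(a^3 z,a)=a^{12}\bigl(z^4-a^3z^3+a^4z^2-a^3z+1\bigr),
\]
so proving the lemma is equivalent to locating zeros of the palindromic quartic $Q(z):=z^4-a^3z^3+a^4z^2-a^3z+1$ inside the interval $(1/a^3,1/a)$. Dividing $Q$ by $z^2$ and substituting $w=z+1/z$ turns this into the quadratic $w^2-a^3 w+(a^4-2)$ in $w$, whose discriminant is $\Delta:=a^6-4a^4+8$.

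The factorization of $\Delta$ in the variable $t:=a^2$ is what produces the numerical bound. Direct substitution shows that $t=1+\sqrt{5}$ annihilates $t^3-4t^2+8$ (since $t^3=16+8\sqrt{5}$ and $4t^2=24+8\sqrt{5}$), and polynomial division completes the factorization
\[
t^3-4t^2+8=\bigl(t-(1+\sqrt{5})\bigr)(t-2)\bigl(t-(1-\sqrt{5})\bigr).
\]
For $a^2\ge 1+\sqrt{5}$ all three factors are non-negative, so $\Delta\ge 0$ and the two roots $w_{\pm}=\tfrac{1}{2}\bigl(a^3\pm\sqrt{\Delta}\bigr)$ of the quadratic in $w$ are real.

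Next I would check that for each $w\in\{w_{+},w_{-}\}$ the smaller root $z_{-}(w):=\tfrac{1}{2}\bigl(w-\sqrt{w^2-4}\bigr)$ of the reciprocal equation $z^2-wz+1=0$ is real and falls into $(1/a^3,1/a)$, which forces $x=a^3 z_{-}(w)\in(1,a^2)$ to be a zero of $S_4$. The reality condition $w\ge 2$ is weakest for $w_{-}$; after squaring $a^3-4\ge\sqrt{\Delta}$, it reduces to $a^4-2a^3+2\ge 0$, whose minimum (at $a=3/2$) equals $5/16>0$. The containment $z_{-}(w)<1/a$ becomes $w>a+1/a$, which for $w_{-}$ translates (after squaring) into the identity $(a^4-2a^2-2)^2/a^2=\Delta+4/a^2$ and therefore holds with explicit gap $4/a^2>0$. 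Similarly $z_{-}(w)>1/a^3$ becomes $w<a^3+1/a^3$, which for $w_{+}$ reduces by squaring to the trivial inequality $a^{10}>a^6-1$.

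Assembling the pieces: for $a^2>1+\sqrt{5}$ the distinct values $w_{+}\ne w_{-}$ produce two distinct real zeros $a^3 z_{-}(w_{\pm})$ of $S_4$ in $(1,a^2)$; for $a^2=1+\sqrt{5}$ the discriminant vanishes, $w_{+}=w_{-}=a^3/2$, and $Q=\bigl(z^2-(a^3/2)z+1\bigr)^2$ is a perfect square, so $S_4$ acquires a double real zero at $x=a^3 z_{-}(a^3/2)\in(1,a^2)$---still two real zeros with multiplicity. The only real obstacle is spotting the palindromic symmetry, which only surfaces after the non-obvious scaling $x=a^3 z$; once that is in hand, the threshold $1+\sqrt{5}$ emerges automatically as the largest root of $t^3-4t^2+8$, and every remaining verification collapses to an elementary squaring identity.
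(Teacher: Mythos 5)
Your proposal is correct, and all the key identities check out: the reduction of $S_4(a^3z,a)$ to the palindromic quartic, the quadratic $w^2-a^3w+(a^4-2)$ in $w=z+1/z$ with discriminant $\Delta=a^6-4a^4+8$, the factorization $t^3-4t^2+8=\bigl(t-(1+\sqrt{5})\bigr)(t-2)\bigl(t-(1-\sqrt{5})\bigr)$ for $t=a^2$ (equivalently the paper's $(a^2-2)(a^4-2a^2-4)$), the reduction of $w_-\ge 2$ to $a^4-2a^3+2\ge 5/16$, and the identity $(a^4-2a^2-2)^2/a^2=\Delta+4/a^2$. However, your route differs from the paper's. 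The paper gives no proof of this lemma at all: it is quoted from the cited references and described as provable ``by direct calculation,'' and the argument there (mirrored in the paper's own proof of the adjacent Lemma on $S_4(x_0,a)\le 0$) is a point-evaluation argument: one checks $S_4(1,a)>0$, $S_4(a^2,a)=a^{-4}>0$, and $S_4\le 0$ at the interior point $x_0$ where $x/a^3+a^3/x=a^3/2$, so two sign changes give two zeros in $(1,a^2)$; the same substitution $x=a^3t$, $w=t+t^{-1}$ is used, but only to evaluate at $w=a^3/2$, not to solve the quartic. You instead solve the quartic completely, locating both roots $a^3z_-(w_\pm)$ explicitly; this buys a self-contained proof and an explanation of why the threshold is exactly the largest root of $t^3-4t^2+8$, at the cost of more bookkeeping, whereas the paper's shorter evaluation argument is precisely what its later lemmas consume. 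Two small points you should make explicit: before squaring $w_->a+1/a$ you need the left-hand side $a^3-2a-2/a$ to be positive, i.e.\ $a^4-2a^2-2>0$, which holds because $a^2\ge 1+\sqrt{5}>1+\sqrt{3}$; and your reading of the boundary case $a^2=1+\sqrt{5}$ (a double zero counted with multiplicity) should be stated as the intended interpretation of ``two real zeros,'' as you already note.
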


\begin{Lemma}  \label{ll3}
Assume that $a^2 \geq 1+\sqrt{5}.$ If $x_0 $ is the smallest root 
of the equation
\begin{equation}
\label{point}
\frac{x}{a^3}+\frac{a^3}{x}=\frac{a^3}{2},
\end{equation}
then $S_4(x_0,a)\leq 0,$ and $x_0\in (1, a^2).$ 
\end{Lemma}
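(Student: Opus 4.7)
The plan is to solve (\ref{point}) explicitly in the first step, then use a palindromic substitution to evaluate $S_4(x_0,a)$ in closed form, and finally verify the resulting sign inequality. For the first step I would clear denominators in (\ref{point}) to obtain the quadratic
\[
P(x) := x^2 - \tfrac{a^6}{2}x + a^6 = 0,
\]
whose discriminant $\tfrac{1}{4}a^6(a^6-16)$ is positive, since $a^2\geq 1+\sqrt5$ gives $a^6\geq (1+\sqrt5)^3 = 16+8\sqrt5>16$. By Vieta's formulas both roots are positive, and the vertex of $P$ lies at $a^6/4>a^2>1$ (using $a^2>2$). Hence to place the smaller root $x_0$ inside $(1,a^2)$ it suffices to check that $P$ is positive at $1$ and negative at $a^2$. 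A direct calculation gives $P(1)=1+a^6/2>0$ and $P(a^2)=-\tfrac{a^4}{2}(a^4-2a^2-2)$, which is negative exactly when $a^2>1+\sqrt3$, a strictly weaker condition than our hypothesis.

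The core of the proof is to obtain a closed form for $S_4(x_0,a)$. The key observation is that under the rescaling $x=a^3u$ the polynomial
\[
S_4(a^3u,a)=u^4-a^3u^3+a^4u^2-a^3u+1
\]
is palindromic. Dividing by $u^2$ and introducing $v=u+1/u$ collapses it to a quadratic in $v$:
\[
\frac{S_4(a^3u,a)}{u^2}=v^2-a^3v+a^4-2.
\]
The defining equation (\ref{point}) for $x_0$ is precisely $v_0=a^3/2$ when we write $u_0=x_0/a^3$. Substituting this value, I obtain the clean formula
\[
S_4(x_0,a)=\frac{x_0^2}{a^6}\left(a^4-2-\frac{a^6}{4}\right).
\]

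The final step is to verify $a^6-4a^4+8\geq 0$ whenever $a^2\geq 1+\sqrt5$. Setting $t=a^2$, the cubic $p(t)=t^3-4t^2+8$ can be checked to factor as $(t-2)\bigl(t-(1+\sqrt5)\bigr)\bigl(t-(1-\sqrt5)\bigr)$; one verifies $p(1+\sqrt5)=0$ by expansion, then divides out the quadratic $t^2-2t-4$ with roots $1\pm\sqrt5$. For $t\geq 1+\sqrt5$ all three factors are nonnegative, so $p(t)\geq 0$, with equality exactly at the threshold $a^2=1+\sqrt5$. Combined with the formula above this yields $S_4(x_0,a)\leq 0$.

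The main obstacle, in my view, is discovering the palindromic substitution $x=a^3u$; without exploiting this symmetry, inserting the radical expression for $x_0$ directly into $S_4$ produces an unwieldy expression that is difficult to sign. It is also not accidental that the threshold $a^2=1+\sqrt5$ from Lemma~\ref{ll2} reappears as a root of the auxiliary cubic $t^3-4t^2+8$: both express the precise regime in which the two real zeros of $S_4(\cdot,a)$ exist and $x_0$ falls between them.
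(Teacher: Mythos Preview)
Your proof is correct and takes essentially the same route as the paper: both exploit the palindromic structure of $S_4(a^3u,a)$ to reduce to a quadratic in $u+u^{-1}$, evaluate at $a^3/2$, and factor $a^6-4a^4+8=(a^2-2)(a^4-2a^2-4)$. The only difference is cosmetic: for the interval check you test the sign of the quadratic $P(x)=x^2-\tfrac{a^6}{2}x+a^6$ at the endpoints $1$ and $a^2$, whereas the paper computes $x_0$ explicitly via the quadratic formula and manipulates the resulting radical inequalities, but both arguments arrive at the same governing condition $a^4-2a^2-2>0$.
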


\begin{proof}  Consider a polynomial
$$P_a(t)=S_4(a^3t,a)=1-a^3t+a^4t^2-a^3t^3+t^4=$$ 
$$t^2\left((t+t^{-1})^2-a^3(t+t^{-1})+a^4-2\right).$$
If $t+t^{-1}=\frac{x}{a^3}+\frac{a^3}{x}=\frac{a^3}{2}$, 
then
$$(t+t^{-1})^2-a^3(t+t^{-1})+a^4-2=-\frac{1}{4}\left(a^6-4a^4+8\right)=$$
\begin{equation}
\label{point5}
-\frac{1}{4}(a^2-2)(a^4-2a^2-4)\leq 0
\end{equation}
for all  $a$ such that $a^2 \geq 1+\sqrt{5}.$

Since $x_0$ is the smallest root of the equation (\ref{point}),
\begin{equation}
\label{point1}
x_0=\frac{a^6}{4}\left(1-\sqrt{1-\frac{16}{a^6}}\right).
\end{equation}

Show that $x_0\in (1, a^2).$ By (\ref{point1}), the point $x_0\in (1, a^2)$ if and only if
\begin{equation}
\label{point2}
1-\frac{4}{a^4} < \sqrt{1-\frac{16}{a^6}}<1-\frac{4}{a^6}.
\end{equation}
The inequality $\sqrt{1-\frac{16}{a^6}}<1-\frac{4}{a^6}$ is obvious.
Let us prove that 
\begin{equation}
\label{pointequ}
1-\frac{4}{a^4} < \sqrt{1-\frac{16}{a^6}}.
\end{equation}
This inequality is equivalent to 
$a^4-2a^2-2>0,$ which is true because $a^2 \geq 1+\sqrt{5}.$
Lemma~\ref{ll3} is proved. 
\end{proof}

\begin{Lemma} \label{ll4}
 Let $f(z)=1+z+\sum_{k=2}^\infty  \frac{z^k}{q_2^{k-1}q_3^{k-2}\cdots 
 q_{k}} \in \widetilde{TP}_\infty.$ Then
\begin{equation}
\label{D2}
\Delta_2^k =:1-\frac{1}{q_k}\geq 0,  \quad  k=2, 3, \ldots
\end{equation}
\begin{equation}
\label{D3}
D_3 =:1-\frac{2}{q_2}+\frac{1}{q_2^2 q_3}\geq 0,  
\end{equation}
\begin{equation}
\label{D4}
D_4 =:1-\frac{3}{q_2}+\frac{2}{q_2^2 q_3}+\frac{1}{q_2^2}-
\frac{1}{q_2^3 q_3^2 q_4}\geq 0,  
\end{equation}
\begin{equation}
\label{D3k}
\Delta_3^k=:1-\frac{2}{q_k}+\frac{1}{q_k^2}\left( \frac{1}{q_{k-1}}+\frac{1}{q_{k+1}} 
\right)-\frac{1}{q_{k-1}q_k^2 q_{k+1}}\geq 0,  
\end{equation}
for $k=2, 3, \ldots$
\end{Lemma}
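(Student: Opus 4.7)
The strategy is to recognize each of the four quantities as a \emph{normalized} minor of the infinite Toeplitz matrix attached to $f$. Since $f \in \widetilde{\mathrm{TP}}_\infty$, the sequence $(a_k)_{k=0}^\infty$ is totally positive, so every such minor is non-negative; normalizing by an appropriate product of the $a_n$ and invoking $q_n = a_{n-1}^2/(a_{n-2} a_n)$ will convert each minor into the stated expression in the $q_n$'s. Throughout I use $a_0 = a_1 = 1$, $a_n = 1/(q_2^{n-1} q_3^{n-2}\cdots q_n)$ for $n\ge 2$, and the convention $a_{-1}=0$.

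First I would dispatch $\Delta_2^k \geq 0$ by taking the $2\times 2$ minor with rows $0,1$ and columns $k-1,k$, which equals $a_{k-1}^2 - a_{k-2} a_k \geq 0$; dividing by $a_{k-1}^2$ gives $1-1/q_k\ge 0$. Next, for $D_3$ I would use the $3\times 3$ minor with rows $0,1,2$ and columns $1,2,3$:
\[
\begin{vmatrix} a_1 & a_2 & a_3 \\ a_0 & a_1 & a_2 \\ 0 & a_0 & a_1 \end{vmatrix} = a_1^3 - 2a_0 a_1 a_2 + a_0^2 a_3,
\]
which after substitution is exactly $D_3$.

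For $D_4$ I would pick the $4\times 4$ minor with rows $0,1,2,3$ and columns $1,2,3,4$ and expand along the last row (two nonzero entries $a_0,a_1$), reducing it to a signed sum of two $3\times 3$ minors, one of which is already the $D_3$-minor. Substituting the $a_n$'s in terms of $q_2,q_3,q_4$ and simplifying yields precisely $1-3/q_2+2/(q_2^2 q_3)+1/q_2^2-1/(q_2^3 q_3^2 q_4) = D_4$. Finally, for $\Delta_3^k$ with $k\ge 3$ I would take the $3\times 3$ minor with rows $0,1,2$ and columns $k-1,k,k+1$:
\[
\begin{vmatrix} a_{k-1} & a_k & a_{k+1} \\ a_{k-2} & a_{k-1} & a_k \\ a_{k-3} & a_{k-2} & a_{k-1} \end{vmatrix} = a_{k-1}^3 - 2a_{k-2}a_{k-1}a_k + a_{k-3}a_k^2 + a_{k-2}^2 a_{k+1} - a_{k-3}a_{k-1}a_{k+1},
\]
divide by $a_{k-1}^3$, and then apply the recursion $a_n = a_{n-1}^2/(q_n a_{n-2})$ three times (once each for $a_k$, $a_{k+1}$, and $a_{k-3}$) to obtain, term by term,
\[
\tfrac{a_{k-2}a_k}{a_{k-1}^2}=\tfrac{1}{q_k},\quad \tfrac{a_{k-3}a_k^2}{a_{k-1}^3}=\tfrac{1}{q_{k-1}q_k^2},\quad \tfrac{a_{k-2}^2 a_{k+1}}{a_{k-1}^3}=\tfrac{1}{q_k^2 q_{k+1}},\quad \tfrac{a_{k-3}a_{k+1}}{a_{k-1}^2}=\tfrac{1}{q_{k-1}q_k^2 q_{k+1}},
\]
which assemble to $\Delta_3^k$. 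For $k=2$ the convention $a_{-1}=0$ makes the corresponding terms vanish and the minor collapses to $D_3$, matching the formal reading $1/q_1 = 0$ in the $\Delta_3^2$ expression; thus $D_3$ can be viewed as the $k=2$ instance.

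The main obstacle is not conceptual but arithmetic: matching the specific $4\times 4$ minor to $D_4$ and correctly normalizing the $\Delta_3^k$-minor each produce several cross-terms, so one must be careful in rewriting ratios $a_{k-3}a_{k+1}/a_{k-1}^2$ and similar via repeated use of $q_n = a_{n-1}^2/(a_{n-2}a_n)$; a clean bookkeeping (for instance via the ratios $r_n := a_n/a_{n-1}$, which satisfy $r_n = r_{n-1}/q_n$) makes these reductions routine.
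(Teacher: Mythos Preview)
Your proposal is correct and is essentially the paper's own argument: both recognize each of $\Delta_2^k$, $D_3$, $D_4$, $\Delta_3^k$ as a (positively normalized) minor of the Toeplitz matrix of $(a_k)$, and indeed the $4\times 4$ block you select for $D_4$ is exactly the matrix displayed in the paper, whose leading principal minors give $\Delta_2^2$, $D_3$, $D_4$. The only cosmetic difference is that for $\Delta_3^k$ the paper simplifies the $3\times 3$ minor by successive row and column scalings, whereas you expand directly and divide by $a_{k-1}^3$; the two reductions are equivalent and yield the same expression.
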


\begin{proof} The inequalities (\ref{D3}),  (\ref{D4}) and (\ref{D2}) for $k=2$ follow 
from non-negativity of principal minors of the matrix
\begin {equation}
\label{mat1}
 \left\|
  \begin{array}{cccc}
   1 & \frac{1}{q_2} & \frac{1}{q_2^2 q_3} &\frac{1}{q_2^3 q_3^2 q_4} \\
   1   & 1 & \frac{1}{q_2} &\frac{1}{q_2^2 q_3} \\
   0   &  1  & 1 & \frac{1}{q_2}\\
   0   &  0  & 1  & 1 \\
     \end{array}
 \right\|.
\end {equation}

In order to prove  (\ref{D3k}) we consider the minor
\begin{equation}
\label{mat2}
 \det \left\|
  \begin{array}{ccc}
   \frac{1}{q_2^{k-2}q_3^{k-3}\cdots q_{k-1}} & \frac{1}{q_2^{k-1}q_3^{k-2}
   \cdots q_{k}} & \frac{1}{q_2^{k}q_3^{k-1}\cdots q_{k+1}}  \\
   \frac{1}{q_2^{k-3}q_3^{k-4}\cdots q_{k-2}}  & \frac{1}{q_2^{k-2}q_3^{k-3}
   \cdots q_{k-1}} & \frac{1}{q_2^{k-1}q_3^{k-2}\cdots q_{k}} \\
   \frac{1}{q_2^{k-4}q_3^{k-5}\cdots q_{k-3}}   &  \frac{1}{q_2^{k-3}q_3^{k-4}
   \cdots q_{k-2}}  & \frac{1}{q_2^{k-2}q_3^{k-3}\cdots q_{k-1}}\\
    \end{array}
 \right\|.
\end {equation}
If we divide each row of the determinant above by its first entry, we will see that it is non-negative 
if and only if the following determinant is nonnegative.
\begin{equation}
\label{mat3}
 \det \left\|
  \begin{array}{ccc}
   1 & \frac{1}{q_2 q_3\cdots q_{k}} & \frac{1}{q_2^{2}q_3^{2}
   \cdots q_k^2 q_{k+1}}  \\
   1 & \frac{1}{q_2 q_3 \cdots q_{k-1}} & \frac{1}{q_2^{2}q_3^{2}
   \cdots q_{k-1}^2 q_{k}} \\
  1   &  \frac{1}{q_2 q_3\cdots q_{k-2}}  & \frac{1}{q_2^{2}q_3^{2}
  \cdots q_{k-2}^2 q_{k-1}}\\
    \end{array}
 \right\|.
\end {equation}
Let us divide each column of the determinant by its last entry. We conclude that 
the sign of this determinant is the same as that of the following determinant
\begin{equation}
\label{mat4}
 \det \left\|
  \begin{array}{ccc}
   1 & \frac{1}{q_{k-1}q_{k}} & \frac{1}{q_{k-1}q_{k}^2 q_{k+1}}  \\
   1 & \frac{1}{q_{k-1}} & \frac{1}{q_{k-1}q_{k}} \\
  1   &  1  & 1\\
    \end{array}
 \right\|=\frac{1}{q_{k-1}}\Delta_3^k.
\end {equation}
Note that $\Delta_2^{k-1}$ is a minor of the matrix in the left side of the formula (\ref{mat4}).

Lemma~\ref{ll4} is proved. 
\end{proof}

\begin{Lemma} \label{ll5} 
The following inequality is valid
$$ \frac{a^2}{x^2}\left(S_4(x,a)-S_4^q (x,a)\right)> \frac{x^2}{a^{10}} D_4+ 
\left(\frac{x}{a^4}-2\frac{x^2}{a^{10}} \right) D_3 +\left(1-2\frac{x}{a^4}+
2\frac{x^2}{a^{10}} \right)\Delta_2^2. $$
\end{Lemma}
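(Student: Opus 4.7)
The plan is to prove this by a direct coefficient-by-coefficient comparison: both sides are polynomial expressions in the auxiliary variables $x/a^4$ and $x^2/a^{10}$, so the inequality should reduce, after simplification, to an explicit nonnegative residual.

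First I would unfold the left-hand side using the definitions (\ref{a1}) and (\ref{a2}). Since $S_4(x,a)$ and $S_4^q(x,a)$ agree in their $k=0,1$ terms, their difference is
\[
S_4(x,a)-S_4^q(x,a)=\frac{x^2}{a^2}\Delta_2^2-\frac{x^3}{a^6}\Bigl(1-\frac{1}{q_2^2q_3}\Bigr)+\frac{x^4}{a^{12}}\Bigl(1-\frac{1}{q_2^3q_3^2q_4}\Bigr),
\]
so multiplying by $a^2/x^2$ rewrites the left-hand side as
\[
\text{LHS}=\Delta_2^2-\frac{x}{a^4}\Bigl(1-\frac{1}{q_2^2q_3}\Bigr)+\frac{x^2}{a^{10}}\Bigl(1-\frac{1}{q_2^3q_3^2q_4}\Bigr).
\]
On the right-hand side I would collect by the same powers and read off
\[
\text{RHS}=\Delta_2^2+\frac{x}{a^4}(D_3-2\Delta_2^2)+\frac{x^2}{a^{10}}(D_4-2D_3+2\Delta_2^2).
\]

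Next I compare the three coefficients. The constant terms cancel trivially, both being $\Delta_2^2$. For the coefficient of $x/a^4$, the required identity is
\[
\Bigl(1-\tfrac{1}{q_2^2q_3}\Bigr)+D_3-2\Delta_2^2=0,
\]
which follows immediately from the definitions of $D_3=1-2/q_2+1/(q_2^2q_3)$ and $\Delta_2^2=1-1/q_2$. For the coefficient of $x^2/a^{10}$, a similar but slightly longer expansion of $D_4$, $D_3$ and $\Delta_2^2$ shows that the terms involving $1/(q_2^2q_3)$ and $1/(q_2^3q_3^2q_4)$ cancel, and what remains is
\[
\Bigl(1-\tfrac{1}{q_2^3q_3^2q_4}\Bigr)-(D_4-2D_3+2\Delta_2^2)=\frac{1}{q_2}-\frac{1}{q_2^2}=\frac{\Delta_2^2}{q_2}.
\]

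Putting these three pieces together, I obtain the clean identity
\[
\text{LHS}-\text{RHS}=\frac{x^2}{a^{10}}\cdot\frac{\Delta_2^2}{q_2},
\]
which is strictly positive since $f\in\widetilde{\mathrm{TP}}_\infty$ forces $q_2\ge 1$ (with $q_2>1$ in any non-trivial situation, so that $\Delta_2^2>0$) and the relevant range of $x$ is $x>0$. There is no real obstacle here: the proof is essentially bookkeeping, and the only ``cleverness'' is in noticing that the particular combination on the right-hand side has been arranged precisely so that the mixed coefficient of $x/a^4$ telescopes to zero, isolating the simple positive residue $x^2\Delta_2^2/(q_2 a^{10})$.
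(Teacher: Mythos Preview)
Your proof is correct and is essentially the same direct algebraic verification as the paper's, only organized more transparently: where the paper subtracts $\frac{x^2}{a^{10}}D_4$ first, drops the positive term $\frac{x^2}{a^{10}}\cdot\frac{\Delta_2^2}{q_2}$ to obtain (\ref{TD4}), and then rearranges into (\ref{TD3}), you compare the coefficients of $1$, $x/a^4$, $x^2/a^{10}$ on both sides at once and isolate the exact residual $\text{LHS}-\text{RHS}=\frac{x^2}{a^{10}}\cdot\frac{\Delta_2^2}{q_2}$. The one point to tighten is the justification of the \emph{strict} inequality: your phrase ``$q_2>1$ in any non-trivial situation'' is informal, and the paper is equally casual here; in the application (Theorem~\ref{inn1}) one has $x=x_0>0$ and the hypotheses of Theorem~\ref{Th3} on $f$, which is what actually makes $\Delta_2^2/q_2>0$.
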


\begin{proof} It follows from (\ref{a1}) and (\ref{a2}) that

$$\frac{a^2}{x^2}\left(S_4(x,a)-S_4^q (x,a)\right)=$$
\begin{equation}
\label{T}
\left(1-\frac{1}{q_2} \right)-\frac{x}{a^4}\left(1-\frac{1}{q_2^2 q_3} \right)+
\frac{x^2}{a^{10}}\left(1-\frac{1}{q_2^3 q_3^2 q_4} \right)=:T_4^q(x,a).
\end{equation}

By (\ref{D4}) 
$$T_4^q(x,a)-\frac{x^2}{a^{10}}D_4=$$
$$\left(1-\frac{1}{q_2} \right)-\frac{x}{a^4}\left(1-\frac{1}{q_2^2 q_3} \right)
+\frac{3}{q_2} \cdot \frac{x^2}{a^{10}}-\frac{1}{q_2^2} \cdot 
\frac{x^2}{a^{10}}-\frac{2}{q_2^2 q_3} \cdot \frac{x^2}{a^{10}}>$$
\begin{equation}
\label{TD4}
\left(1-\frac{x}{a^4}\right)+\frac{1}{q_2}\left(-1+2\frac{x^2}{a^{10}}\right)+
\frac{1}{q_2^2 q_3}\left(\frac{x}{a^4}-2\frac{x^2}{a^{10}}\right).
\end{equation}

Hence,
$$T_4^q(x,a)-\frac{x^2}{a^{10}}D_4-\frac{1}{q_2^2 q_3}\left(\frac{x}{a^4}-
2\frac{x^2}{a^{10}}\right)>$$
$$ \left(1-2\frac{x}{a^4}+2\frac{x^2}{a^{10}}\right)-\frac{1}{q_2}\left(1-2\frac{x}{a^4}+
2\frac{x^2}{a^{10}}\right)=$$
\begin{equation}
\label{TD3}
\left(1-2\frac{x}{a^4}+2\frac{x^2}{a^{10}}\right)\Delta_2^2
\end{equation}
Lemma~\ref{ll5}  is proved. 
\end{proof}
\begin{Lemma} \label{ll6} 
Let $x_0$ be defined by (\ref{point}). Then
\begin{equation}
\label{e1}
\frac{x_0}{a^4}-2\frac{x_0^2}{a^{10}}> 0,
\end{equation}
and
\begin{equation}
\label{e2}
1-2\frac{x_0}{a^4}+2\frac{x_0^2}{a^{10}}>0.
\end{equation}
\end{Lemma}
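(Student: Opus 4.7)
The plan is to exploit the defining equation (\ref{point}) for $x_0$, which after clearing denominators becomes the quadratic
\[
x_0^2 - \frac{a^6}{2}\,x_0 + a^6 = 0.
\]
This lets me rewrite every occurrence of $x_0^2$ as a linear expression in $x_0$, so that both (\ref{e1}) and (\ref{e2}) collapse into simple inequalities that can be checked using the bound $x_0 < a^2$ from Lemma~\ref{ll3} together with the standing hypothesis $a^2 \geq 1+\sqrt{5}$.

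For (\ref{e1}), I would factor
\[
\frac{x_0}{a^4}-2\frac{x_0^2}{a^{10}} = \frac{x_0}{a^4}\Bigl(1 - \frac{2x_0}{a^6}\Bigr).
\]
Dividing the quadratic relation by $x_0>0$ yields $x_0 = \frac{a^6}{2} - \frac{a^6}{x_0}$, whence $1 - \frac{2x_0}{a^6} = \frac{2}{x_0}$. The whole expression therefore reduces to $\frac{2}{a^4}$, which is manifestly positive. (In fact this gives an exact identity, not just an inequality.)

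For (\ref{e2}), I would substitute $\frac{x_0^2}{a^{10}} = \frac{x_0}{2a^4} - \frac{1}{a^4}$, which is merely the quadratic rearranged and divided by $a^{10}$. After cancellation, the left-hand side becomes $1 - \frac{x_0+2}{a^4}$, which is positive if and only if $x_0 < a^4 - 2$. Lemma~\ref{ll3} already gives $x_0 < a^2$, so it suffices to verify $a^2 \leq a^4 - 2$, equivalently $(a^2-2)(a^2+1) \geq 0$. This is true because $a^2 \geq 1+\sqrt{5} > 2$.

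No genuine obstacle is anticipated; the lemma is essentially a one-step reduction once one uses the defining quadratic to eliminate $x_0^2$. The only point that requires a moment's care is checking that the standing hypothesis $a^2 \geq 1+\sqrt{5}$ is strong enough for the final comparison $a^2 < a^4-2$, which reduces to the inequality $a^2 > 2$ and is therefore comfortably implied.
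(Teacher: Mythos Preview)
Your argument is correct and in fact cleaner than the paper's. The key difference is how you handle the quantity $x_0$: rather than working with the explicit radical expression
\[
x_0=\frac{a^6}{4}\Bigl(1-\sqrt{1-\tfrac{16}{a^6}}\Bigr)
\]
as the paper does, you use the defining quadratic $x_0^2-\tfrac{a^6}{2}x_0+a^6=0$ to eliminate $x_0^2$ outright. This collapses (\ref{e1}) to the exact identity $\frac{x_0}{a^4}-2\frac{x_0^2}{a^{10}}=\frac{2}{a^4}$, and reduces (\ref{e2}) to the linear condition $x_0<a^4-2$, which follows immediately from $x_0<a^2$ (Lemma~\ref{ll3}) and the elementary factorization $a^4-a^2-2=(a^2-2)(a^2+1)\ge 0$. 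The paper instead substitutes $t=a^6/x_0$, rewrites (\ref{e2}) as a quadratic inequality in $t$, and then compares two square-root expressions, using $a^4/4>1$ and $1-16/a^6>1-2/a^2$ for $a^2>3$. Your route avoids all square-root manipulation and even yields an exact value in the first inequality; the paper's route is more hands-on with the explicit formula but arrives at the same conclusions. Both rely on the standing hypothesis $a^2\ge 1+\sqrt{5}$, which in your proof is only needed (and only weakly, via $a^2>2$) in the final step.
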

\begin{proof}  By virtue of (\ref{point1}) we have
$$\frac{x_0}{a^4}-2\frac{x_0^2}{a^{10}}=\frac{x_0}{a^4}\left(1-2\frac{x_0}{a^6}\right)=
\frac{x_0}{2a^4}\left(1+\sqrt{1-\frac{16}{a^6}}\right)>0.$$
The inequality (\ref{e1}) is proved.

Let us prove (\ref{e2}). Denote by $t=\frac{a^6}{x_0}.$ We have
\begin{equation}
\label{e3}
1-2\frac{x_0}{a^4}+2\frac{x_0^2}{a^{10}}=\frac{x_0^2}{a^{10}}
\left(\frac{1}{a^2}t^2-2t+2\right).
\end{equation}
Therefore, (\ref{e2}) is equivalent to the inequality
\begin{equation}
\label{e4}
\frac{1}{a^2}t^2-2t+2>0,
\end{equation}
which is true if 
\begin{equation}
\label{e5}
t>a^2\left(1+\sqrt{1-\frac{2}{a^2}}\right).
\end{equation}
By (\ref{point1})
$$t=\frac{a^6}{x_0}=\frac{a^6}{4}\left(1+\sqrt{1-\frac{16}{a^6}}\right).$$
So, (\ref{e5}) can be rewritten in the form
\begin{equation}
\label{e6}
\frac{a^4}{4}\left(1+\sqrt{1-\frac{16}{a^6}}\right)>1+\sqrt{1-\frac{2}{a^2}}.
\end{equation}
Since $a^2>3$, the following inequalities are valid
$$\frac{a^4}{4}>1 \quad \text{and} \quad 1-\frac{16}{a^6}>1-\frac{2}{a^2}.$$
Hence, (\ref{e6}) is true.

Lemma~\ref{ll6}  is proved. 
\end{proof}

\begin{Theorem}\label{inn1}
 If $a^2 \geq 1+\sqrt{5},$ then $S_{n}^q((x_0,a))<0, \  n=4,5,\ldots,$ 
and $(f\ast g_a)((x_0,a))<0.$
\end{Theorem}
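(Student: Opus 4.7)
The plan is to prove the $n=4$ case first — it is the only case that uses real content — and then extend to every $n\ge 5$ by an elementary Leibniz-type alternating-series argument, concluding with a limit to handle the $(f\ast g_a)$ claim.

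For the base case $n=4$, I evaluate the strict inequality of Lemma~\ref{ll5} at $x=x_0$. By Lemma~\ref{ll4} the three quantities $D_3,D_4,\Delta_2^2$ are non-negative, and by Lemma~\ref{ll6} the two non-trivial coefficient factors $\frac{x_0}{a^4}-2\frac{x_0^2}{a^{10}}$ and $1-2\frac{x_0}{a^4}+2\frac{x_0^2}{a^{10}}$ on the right are strictly positive (while the factor $\frac{x_0^2}{a^{10}}$ multiplying $D_4$ is manifestly positive). Thus the entire right-hand side of Lemma~\ref{ll5} is $\ge 0$, and the strict inequality there yields $S_4(x_0,a)-S_4^q(x_0,a)>0$. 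Combining with $S_4(x_0,a)\le 0$ from Lemma~\ref{ll3} gives $S_4^q(x_0,a)<0$.

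For $n\ge 5$, I would write $S_n^q(x_0,a)=S_4^q(x_0,a)+T_n$, where
\[
T_n=\sum_{k=5}^{n}(-1)^k c_k x_0^k,\qquad c_k:=\frac{1}{a^{k(k-1)}q_2^{k-1}q_3^{k-2}\cdots q_k}>0.
\]
The ratio of consecutive magnitudes is
\[
\frac{c_{k+1}x_0^{k+1}}{c_k x_0^k}=\frac{x_0}{a^{2k}\,q_2 q_3\cdots q_{k+1}},
\]
which is strictly less than $1$ for every $k\ge 5$: indeed $x_0<a^2$ by Lemma~\ref{ll3}, and each $q_i\ge 1$ because $\Delta_2^i=1-q_i^{-1}\ge 0$ by Lemma~\ref{ll4}. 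Hence $(c_k x_0^k)_{k\ge 5}$ is strictly decreasing, and $T_n$ is a Leibniz partial sum whose first term $-c_5 x_0^5$ is negative. The standard nesting of odd and even partial sums of such a series then forces $T_n\le T_6=-c_5 x_0^5+c_6 x_0^6<0$ for every $n\ge 5$, so $S_n^q(x_0,a)=S_4^q(x_0,a)+T_n<0$.

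Passing to $n\to\infty$, the sequence $T_n$ converges with limit still $\le T_6<0$, and so $(f\ast g_a)(-x_0)=\lim_{n\to\infty}S_n^q(x_0,a)<0$, which is the second assertion of the theorem. The only place where genuine work is concentrated is the base case, and that work has already been packaged into Lemmas~\ref{ll3}--\ref{ll6}; the Leibniz extension step is structurally routine once one observes that the hypothesis $a^2\ge 1+\sqrt{5}$ delivers both $x_0<a^2$ and $q_i\ge 1$ uniformly, which is precisely what controls the ratios.
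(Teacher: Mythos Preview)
Your proof is correct and follows essentially the same route as the paper's: establish $S_4^q(x_0,a)<S_4(x_0,a)\le 0$ by combining Lemmas~\ref{ll3}--\ref{ll6}, then show the tail beyond index $4$ is nonpositive because consecutive terms decrease in magnitude (using $x_0<a^2$ and $q_i\ge 1$). The paper phrases the tail bound as a pairing argument while you phrase it via Leibniz nesting of partial sums, but these are the same mechanism. One cosmetic slip in your closing sentence: the inequality $q_i\ge 1$ is supplied by the total-positivity hypothesis on $f$ (Lemma~\ref{ll4}), not by $a^2\ge 1+\sqrt{5}$; you attribute it correctly earlier, so this does not affect the argument.
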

\begin{proof}
It follows from Lemma~\ref{ll5}, Lemma~\ref{ll6} and Lemma~\ref{ll3}
 that
\begin{equation}
\label{monoton}
S_4^q(x_0,a)<S_4(x_0,a) \leq 0.
\end{equation}
Let's notice that 
\begin{equation}
\label{monoton1}
\frac{x^{k+1}}{a^{(k+1)k}q_2^{k}q_3^{k-1}\cdots q_{k+1}}<
\frac{x^{k}}{a^{k(k-1)}q_2^{k-1}q_3^{k-2}\cdots q_{k}}
\end{equation}
if and only if
\begin{equation}
\label{monoton2}
x<a^{2k}q_2q_3\cdots q_{k+1}.
\end{equation}
Obviously, by Lemma~\ref{ll3}, the number $x_0$ satisfies (\ref{monoton2}) for all $k=4,5,\ldots$ 
Thus, by  (\ref{monoton}) and  (\ref{monoton1})
$$(f\ast g_a)((x_0,a))=S_{4}^q((x_0,a))-$$ $$\sum_{k=2}^{\infty} 
\left(\frac{x_0^{2k+1}}{a^{(2k+1)(2k)}q_2^{2k}q_3^{2k-1}\cdots 
q_{2k+1}}-\frac{x_0^{2k+2}}{a^{(2k+2)(2k+1)}q_2^{2k+1}q_3^{2k}
\cdots q_{2k+2}}\right)\leq $$
$$ S_{4}^q((x_0,a))<0.$$ 
Using the same idea, we can show that
$$S_{n}^q((x_0,a))<S_{4}^q((x_0,a))<0$$
for all $n=5,6,\ldots$
Theorem~\ref{inn1} is proved. 
\end{proof}

\begin{Lemma} \label{ll7} 
Let $\hat{x}_m=a^{(2m-3)}q_2 q_3\cdots q_{m-1}\sqrt{q_m}.$ For all 
$m=3, 4, \ldots, n-1 $ the following estimation is valid
$$(-1)^{(m-1)}a^{(m-1)(m-2)} q_2^{m-2}q_3^{m-3}\cdots q_{m-1} 
\frac{S_{n}^q((\hat{x}_m,a))}{\hat{x}_m^{m-1}}\geq$$
\begin{equation}
\label{L6}
 1-\frac{2}{a \sqrt{q_m}}+\frac{1}{a^4 q_m}\left(\frac{1}{q_{m+1}}+
 \frac{1}{q_{m-1}}\right) -\frac{1}{a^9 q_m^{3/2}}\left(\frac{1}{q_{m+1}}+
 \frac{1}{q_{m-1}}\right)^2.
\end{equation}
\end{Lemma}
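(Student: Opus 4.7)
The plan is to normalize $S_n^q(\hat x_m,a)$ so that the term of index $m-1$ becomes $1$, compute the five central terms of index $m-3,m-2,m-1,m,m+1$ explicitly, and bound the two remaining tails by the classical alternating-series estimate.

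Write $S_n^q(\hat x_m,a)=\sum_{k=0}^n A_k$ with $A_0=1$, $A_1=-\hat x_m$, and $A_k=(-1)^k\hat x_m^k/(a^{k(k-1)}q_2^{k-1}\cdots q_k)$ for $k\geq 2$. The prefactor on the left of the claimed inequality is precisely $1/A_{m-1}$ (both carry the sign $(-1)^{m-1}$), so setting $N_k:=A_k/A_{m-1}$ the lemma becomes $\sum_{k=0}^n N_k\geq(\mathrm{RHS})$. For $k\geq 1$ the consecutive ratio is $A_{k+1}/A_k=-\hat x_m/(a^{2k}q_2\cdots q_{k+1})$, and the defining choice $\hat x_m=a^{2m-3}q_2\cdots q_{m-1}\sqrt{q_m}$ is engineered precisely so that at $k=m-1$ this ratio has modulus $1/(a\sqrt{q_m})$.

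Propagating the ratios outward from $N_{m-1}=1$ one step at a time yields
\[
N_{m-2}=N_m=-\frac{1}{a\sqrt{q_m}},\quad N_{m-3}=\frac{1}{a^4 q_{m-1}q_m},\quad N_{m+1}=\frac{1}{a^4 q_m q_{m+1}},
\]
and a second step gives
\[
N_{m-4}=-\frac{1}{a^9 q_m^{3/2}q_{m-1}^2 q_{m-2}},\qquad N_{m+2}=-\frac{1}{a^9 q_m^{3/2}q_{m+1}^2 q_{m+2}}.
\]
The five inner terms sum to exactly $1-\tfrac{2}{a\sqrt{q_m}}+\tfrac{1}{a^4 q_m}\bigl(\tfrac{1}{q_{m-1}}+\tfrac{1}{q_{m+1}}\bigr)$, the main part of the RHS. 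Each further step outward multiplies the magnitude ratio by an extra factor $1/(a^2 q_j)\leq 1/a^2<1$, with $q_j\geq 1$ supplied by (\ref{D2}) in Lemma~\ref{ll4}; therefore $|N_k|$ is strictly unimodal with peak at $k=m-1$, and the $N_k$ alternate in sign outward from that peak. The standard alternating-series bound then gives
\[
\sum_{k=m+2}^n N_k\geq N_{m+2},\qquad \sum_{k=0}^{m-4} N_k\geq N_{m-4},
\]
where the second inequality is vacuous for $m=3$ and trivial for $m=4$. Adding these tail bounds to the central block and using $q_{m\pm 2}\geq 1$ to drop the factors $q_{m\pm 2}$ from the denominators of $N_{m\pm 2}$ yields
\[
\sum_{k=0}^n N_k\geq 1-\frac{2}{a\sqrt{q_m}}+\frac{1}{a^4 q_m}\Bigl(\frac{1}{q_{m-1}}+\frac{1}{q_{m+1}}\Bigr)-\frac{1}{a^9 q_m^{3/2}}\Bigl(\frac{1}{q_{m-1}^2}+\frac{1}{q_{m+1}^2}\Bigr),
\]
and the elementary inequality $\tfrac{1}{q_{m-1}^2}+\tfrac{1}{q_{m+1}^2}\leq\bigl(\tfrac{1}{q_{m-1}}+\tfrac{1}{q_{m+1}}\bigr)^2$ upgrades this to the symmetric form stated in (\ref{L6}).

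The main obstacle is bookkeeping: one must verify the five explicit values $N_{m-3},N_{m-2},N_{m-1},N_m,N_{m+1}$ and the two next-order values $N_{m\pm 2}$ without sign or exponent slips, and confirm strict unimodality of $|N_k|$ uniformly all the way down to $k=0$ and up to $k=n$. Both checks depend on the precise definition of $\hat x_m$, which is exactly what produces the symmetric pairs $N_{m-2}=N_m$ and the matched tail estimates on the two sides.
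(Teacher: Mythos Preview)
Your proof is correct and follows essentially the same route as the paper's: normalize by the $(m-1)$st term, keep the seven central terms $N_{m-4},\ldots,N_{m+2}$ (you split this as the five innermost plus the two first-omitted tail terms, which is equivalent), use the alternating-series/monotonicity bound from (\ref{monoton1})--(\ref{monoton2}) to discard the remaining tails, then drop the factors $q_{m\pm 2}\ge 1$ and apply $\frac{1}{q_{m-1}^2}+\frac{1}{q_{m+1}^2}\le\bigl(\frac{1}{q_{m-1}}+\frac{1}{q_{m+1}}\bigr)^2$. The paper handles the edge case $m=3$ by the convention $q_1=1$, while you note that the left tail is empty there; both lead to the stated bound since the missing $N_{m-4}$ term is negative.
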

\begin{proof} Note that by virtue of (\ref{monoton1}) and (\ref{monoton2})  
for $\hat{x}_m=a^{(2m-3)}q_2 q_3\cdots q_{m-1}\sqrt{q_m}$ we have
$$(-1)^{(m-1)} S_{n}^q((\hat{x}_m,a))\geq \frac{\hat{x}_m^{m-1}}{a^{(m-1)(m-2)} 
q_2^{m-2}q_3^{m-3}\cdots q_{m-1}}$$  
\begin{align*}
\label{b1}
\left(1-\frac{\hat{x}_m}{a^{2m-2} q_2 q_3\cdots q_{m}}-\frac{a^{2m-4} q_2 q_3
\cdots q_{m-1}}{\hat{x}_m}+\frac{\hat{x}_m^2}{a^{4m-2} q_2^{2}\cdots 
q_ m^2 q_{m+1}}+\right. \\
\left. \frac{a^{4m-10} q_2^2 \cdots q_{m-2}^2q_{m-1}}{\hat{x}_m^2}
-\frac{\hat{x}_m^3}{a^{6m} q_2^3\cdots q_m^3 q_{m+1}^2 q_{m+2}}-
\frac{a^{6m-18} q_2^3 \cdots q_{m-3}^3 q_{m-2}^2q_{m-1}}{\hat{x}_m^3}\right)
\end{align*}
(in the case of $m=3$ we will assume that $q_1=1$).
Hence, substituting an explicit expression for $\hat{x}$ into the above estimation we obtain
$$(-1)^{(m-1)}a^{(m-1)(m-2)} q_2^{m-2}q_3^{m-3}\cdots q_{m-1} 
\frac{S_{n}^q((\hat{x}_m,a))}{\hat{x}_m^{m-1}}\geq $$
$$1-\frac{2}{a \sqrt{q_m}}+\frac{1}{a^4 q_m}\left(\frac{1}{q_{m+1}}+
\frac{1}{q_{m-1}}\right) -\frac{1}{a^9 q_m^{3/2}}\left(\frac{1}{q_{m+1}^2 q_{m+2}}+
\frac{1}{q_{m-1}^2 q_{m-2}}\right)$$
\begin{equation}
\label{b2}
\geq 1-\frac{2}{a \sqrt{q_m}}+\frac{1}{a^4 q_m}\left(\frac{1}{q_{m+1}}+
\frac{1}{q_{m-1}}\right) -\frac{1}{a^9 q_m^{3/2}}\left(\frac{1}{q_{m+1}^2}+
\frac{1}{q_{m-1}^2}\right).
\end{equation}
The inequality (\ref{L6}) follows immediately from  (\ref{b2}).
Lemma~\ref{ll7}   is proved. 
\end{proof}

It follows from Lemma~\ref{ll7} that 
\begin{equation}
\label{b3}
\text{If} \ \ q_m\geq \frac{4}{a^2}, \ \text{then}\ \ (-1)^{m-1} 
S_{n}^q((\hat{x}_m,a))>0, \ \ m=3,4,\ldots, n-1.
\end{equation}

\begin{Lemma} \label{ll8}
When $1\leq q_m<\frac{4}{a^2},$ the inequality below is true
$$1-\frac{2}{a \sqrt{q_m}}+\frac{1}{a^4 q_m}\left(\frac{1}{q_{m+1}}+
\frac{1}{q_{m-1}}\right) -\frac{1}{a^9 q_m^{3/2}}\left(\frac{1}{q_{m+1}}+
\frac{1}{q_{m-1}}\right)^2\geq$$
\begin{equation}
\label{b4}
1-\frac{2}{a}+\frac{1}{a^4 }\left(\frac{1}{q_{m+1}}+\frac{1}{q_{m-1}}\right) -
\frac{1}{a^9} \left(\frac{1}{q_{m+1}}+\frac{1}{q_{m-1}}\right)^2.
\end{equation}
\end{Lemma}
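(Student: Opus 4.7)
The plan is to reduce the statement to a single AM-GM estimate. Introduce the shorthands $s := 1/\sqrt{q_m}$ and $u := 1/q_{m-1} + 1/q_{m+1}$; by Lemma~\ref{ll4} we have $u \geq 0$, and the hypothesis $1 \leq q_m < 4/a^2$ translates to $a/2 < s \leq 1$ (in particular this forces $a^2 < 4$, otherwise the statement is vacuous). Subtracting the right-hand side from the left-hand side of the claimed inequality and collecting terms yields
$$\frac{2}{a}(1-s) \;-\; \frac{u}{a^4}(1-s^2) \;+\; \frac{u^2}{a^9}(1-s^3).$$
Using $1-s^2 = (1-s)(1+s)$ and $1-s^3 = (1-s)(1+s+s^2)$, one pulls out the non-negative factor $1-s$, so the task reduces to proving that
$$\frac{2}{a} \;-\; \frac{u(1+s)}{a^4} \;+\; \frac{u^2(1+s+s^2)}{a^9} \;\geq\; 0.$$

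The key idea is that the positive first and third terms dominate the negative middle term via AM-GM:
$$\frac{2}{a} + \frac{u^2(1+s+s^2)}{a^9} \;\geq\; \frac{2u\sqrt{2(1+s+s^2)}}{a^{5}}.$$
It therefore suffices to verify the $u$-free inequality $2\sqrt{2(1+s+s^2)} \geq a(1+s)$, or equivalently $8(1+s+s^2) \geq a^2(1+s)^2$. A one-variable analysis of $g(s):=(1+s+s^2)/(1+s)^2$ gives $g'(s) = (s-1)/(1+s)^3 \leq 0$ on $(0,1]$, so $g$ attains its minimum on this interval at $s=1$ with $g(1)=3/4$. Hence $8(1+s+s^2) \geq 6(1+s)^2$, and since $a^2 < 4 < 6$, the desired inequality follows.

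The main delicacy, such as it is, lies in spotting the $(1-s)$ factorization — it is what converts a mixed-sign polynomial into a non-negative scalar times a bracket amenable to AM-GM — rather than in the estimates themselves. Edge cases are automatic: if $q_m = 1$ then $s=1$ and the difference vanishes identically; if $u = 0$ the bracket reduces to $2/a > 0$. The final slack $8g(s) \geq 6 > a^2$ shows that the AM-GM step, though lossy, is comfortably sufficient, so no refinement is needed.
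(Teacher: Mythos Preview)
Your proof is correct, but it proceeds along a different line than the paper's. The paper parametrizes the left-hand side as a cubic $\varphi(t)$ in $t=1/\sqrt{q_m}$, computes $\varphi'(t)$, and observes that its discriminant $\tfrac{4u^2}{a^{10}}(a^2-6)$ is negative when $a^2<4$; hence $\varphi'<0$ everywhere, $\varphi$ is decreasing on $[a/2,1]$, and $\varphi(t)\geq\varphi(1)$ is exactly the claimed inequality. You instead form the difference $\varphi(s)-\varphi(1)$, factor out $(1-s)\geq 0$, and control the residual quadratic-in-$u$ bracket by AM--GM, reducing to $8(1+s+s^2)\geq a^2(1+s)^2$, which you verify by minimizing $g(s)=(1+s+s^2)/(1+s)^2$ over $(0,1]$.

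Both routes are elementary and both ultimately hinge on the same threshold $a^2<6$ (your bound $8g(1)=6$ is precisely the discriminant cutoff in the paper). The paper's monotonicity argument is slightly cleaner conceptually---one derivative, one discriminant---while your factor-and-AM--GM approach is more hands-on but has the virtue of making the equality case $s=1$ visible from the outset. Neither dominates the other; your version is a perfectly acceptable alternative.
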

\begin{proof} Consider the function
$$\varphi(t)=1 \ - \ \frac{2}{a}t \ \ +$$
\begin{equation}
\label{b5}
\frac{1}{a^4 }\left(\frac{1}{q_{m+1}}+\frac{1}{q_{m-1}}\right) t^2-
\frac{1}{a^9} \left(\frac{1}{q_{m+1}}+\frac{1}{q_{m-1}}\right)^2 t^3, \   
\frac{a}{2}\leq  t \leq 1.
\end{equation}
Let us find the derivative $\varphi^\prime (t):$
\begin{equation}
\label{b6}
\varphi^\prime (t)=-\frac{2}{a}+\frac{2}{a^4 }\left(\frac{1}{q_{m+1}}+
\frac{1}{q_{m-1}}\right) t-\frac{3}{a^9} \left(\frac{1}{q_{m+1}}+
\frac{1}{q_{m-1}}\right)^2 t^2.  
\end{equation}
The discriminant $\mathcal{D}$ of the quadratic polynomial above 

$$\mathcal{D}=\frac{4}{a^{10}}\left(a^2-6 \right) \left(\frac{1}{q_{m+1}}+
\frac{1}{q_{m-1}}\right)^2<0$$

when $a^2<4.$
Hence, $\varphi^\prime (t)<0,$ and for all $\frac{a}{2}\leq  t \leq 1$ we have
$$\varphi(t)\geq \varphi(1).$$
The statement of Lemma~\ref{ll8} follows from the last estimation if we
put $t=\frac{1}{\sqrt q_m}$. 
Lemma~\ref{ll8} is proved. 
\end{proof}

\begin{Lemma} \label{ll9} If $3<a^2<4$, and $1\leq q_m<\frac{4}{a^2},$ 
the following estimation is valid 
\begin{equation}
\label{b7}
\left(q_{m-1}-\frac{1}{A}\right)\left(q_{m+1}-\frac{1}{A}\right)\leq \frac{1-A}{A^2},
\end{equation}
where
\begin{equation}
\label{b8}
A=\frac{8(a^2-2)}{a^4}.
\end{equation} 
\end{Lemma}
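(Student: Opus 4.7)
The plan is to derive this inequality from the total positivity constraint expressed by the non-negativity of $\Delta_3^m$ proved in Lemma~\ref{ll4}, combined with the hypothesis $1\leq q_m<4/a^2$. First I would rewrite
$$\Delta_3^m =1-\frac{2}{q_m}+\frac{1}{q_m^2}\left(\frac{1}{q_{m-1}}+\frac{1}{q_{m+1}}\right)-\frac{1}{q_{m-1}q_m^2 q_{m+1}}\geq 0$$
in a factored form. Multiplying by $q_m^2 q_{m-1}q_{m+1}$ and reorganizing, the inequality becomes equivalent to
$$(q_m-1)^2\;\geq\;\frac{(q_{m-1}-1)(q_{m+1}-1)}{q_{m-1}\,q_{m+1}}.$$
Note that $q_{m-1},q_{m+1}\geq 1$ by $\Delta_2^{m-1},\Delta_2^{m+1}\geq 0$, so the right-hand side is non-negative.

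Next I would use the assumption $1\leq q_m<4/a^2$ to bound the left-hand side. Since $0\leq q_m-1<(4-a^2)/a^2$, squaring gives
$$\frac{(q_{m-1}-1)(q_{m+1}-1)}{q_{m-1}\,q_{m+1}}\;\leq\;(q_m-1)^2\;<\;\frac{(4-a^2)^2}{a^4}.$$
The algebraic heart of the lemma is the identity
$$1-\frac{(4-a^2)^2}{a^4}=\frac{a^4-(a^2-4)^2}{a^4}=\frac{4(2a^2-4)}{a^4}=\frac{8(a^2-2)}{a^4}=A,$$
so the bound above is precisely $(q_{m-1}-1)(q_{m+1}-1)\leq (1-A)\,q_{m-1}\,q_{m+1}$.

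Finally, I would expand this last inequality and collect terms: it reads $A\,q_{m-1}\,q_{m+1}+1\leq q_{m-1}+q_{m+1}$. Multiplying through by $A$ and completing the product, one verifies
$$A^2\,q_{m-1}\,q_{m+1}-A(q_{m-1}+q_{m+1})+1\;\leq\;1-A,$$
which factors as $\bigl(Aq_{m-1}-1\bigr)\bigl(Aq_{m+1}-1\bigr)\leq 1-A$; dividing by $A^2$ yields the target inequality $\bigl(q_{m-1}-\tfrac1A\bigr)\bigl(q_{m+1}-\tfrac1A\bigr)\leq (1-A)/A^2$. I do not anticipate any substantive obstacle: the proof is a chain of elementary algebraic reductions, the only structural input being the determinantal minor inequality $\Delta_3^m\geq 0$ already furnished by Lemma~\ref{ll4}, and the only ``coincidence'' being the identity $1-(4-a^2)^2/a^4=A$, which is forced by the very definition of $A$.
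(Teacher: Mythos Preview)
Your proof is correct and follows essentially the same route as the paper: both start from the minor inequality $\Delta_3^m\ge 0$ of Lemma~\ref{ll4}, multiply through by $q_m^2 q_{m-1}q_{m+1}$, bound the $q_m$-dependent part using $1\le q_m<4/a^2$, and then rearrange into the factored form~\eqref{b7}. The only difference is cosmetic: the paper writes the cleared inequality as $q_{m-1}q_{m+1}(q_m^2-2q_m)+q_{m-1}+q_{m+1}-1\ge 0$ and bounds $q_m^2-2q_m<-A$, whereas you complete the square to $(q_m-1)^2$ and bound it by $1-A$, which is the same estimate since $(q_m-1)^2-1=q_m^2-2q_m$.
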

\begin{proof} Let us rewrite (\ref{D3k}) in the form
\begin{equation}
\label{b9}
q_{m-1}q_{m+1}(q_{m}^2-2 q_{m})+q_{m-1}+q_{m+1}-1\geq 0.
\end{equation}
Note that for $1\leq q_m<\frac{4}{a^2},$ and $3<a^2<4,$ the following is true
$$q_{m}^2-2 q_{m}\  <\frac{16-8a^2}{a^4}=-A \ <0.$$
Thus, by (\ref{b9})
\begin{equation}
\label{b10}
-A q_{m-1}q_{m+1}+q_{m-1}+q_{m+1}-1>0.
\end{equation}
The inequality (\ref{b7}) follows from (\ref{b10}).
Lemma~\ref{ll9} is proved. 
\end{proof}

Because $1<\frac{1}{q_{m+1}}+\frac{1}{q_{m-1}}\leq 2,$  and $ a^2>,3$ 
the right part of the inequality (\ref{b4}) increases as a function of 
$\frac{1}{q_{m+1}}+\frac{1}{q_{m-1}}.$ Denote by 
\begin{equation}
\label{b11}
\mu=\min \left\{
\left. \frac{1}{q_{m+1}}+\frac{1}{q_{m-1}}\right|    
\left(q_{m-1}-\frac{1}{A}\right)\left(q_{m+1}-\frac{1}{A}\right)\leq \frac{1-A}{A^2}\right\},
\end{equation}
where $A$ is defined by (\ref{b8}), and $3<a^2<4.$ It follows from (\ref{b4}) that
$$1-\frac{2}{a \sqrt{q_m}}+\frac{1}{a^4 q_m}\left(\frac{1}{q_{m+1}}+
\frac{1}{q_{m-1}}\right) -\frac{1}{a^9 q_m^{3/2}}\left(\frac{1}{q_{m+1}}+
\frac{1}{q_{m-1}}\right)^2\geq$$
\begin{equation}
\label{b12}
1-\frac{2}{a}+\frac{1}{a^4 }\mu -\frac{1}{a^9} \mu^2.
\end{equation}

\begin{Lemma} \label{ll10} 
If $3<a^2<4$, and $1\leq q_m<\frac{4}{a^2},$ then 
\begin{equation}
\label{b13}
\mu \geq A,
\end{equation}
and
$$1-\frac{2}{a \sqrt{q_m}}+\frac{1}{a^4 q_m}\left(\frac{1}{q_{m+1}}+
\frac{1}{q_{m-1}}\right) -\frac{1}{a^9 q_m^{3/2}}\left(\frac{1}{q_{m+1}}+
\frac{1}{q_{m-1}}\right)^2\geq$$
\begin{equation}
\label{b14}
1-\frac{2}{a}+\frac{8}{a^6}-\frac{16}{a^8}-\frac{8}{a^{13}}+
\frac{32}{a^{15}}-\frac{32}{a^{17}}.
\end{equation}
\end{Lemma}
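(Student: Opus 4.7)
The plan is to handle the two assertions in sequence, exploiting the monotonicity of the right-hand side of (\ref{b12}) as a function of $\mu$.

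For the first assertion $\mu \geq A$, I would perform the change of variables $x := 1/q_{m-1}$ and $y := 1/q_{m+1}$, so the quantity being minimized becomes simply $x + y$. Under this substitution, the constraint (\ref{b7}) rewrites as
$$(A - x)(A - y) \leq (1 - A)\, xy,$$
which after clearing denominators and cancelling an $xy$ term reduces to $A - (x + y) + xy \leq 0$, that is, $x + y \geq A + xy$. Since $q_{m-1}, q_{m+1} \geq 1$ by Lemma~\ref{ll4}, we have $x, y \in (0, 1]$ and hence $xy \geq 0$, giving $x + y \geq A$. Taking the infimum over admissible pairs yields $\mu \geq A$.

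For the second assertion I would treat the right-hand side of (\ref{b12}) as a single-variable function
$$h(t) := 1 - \frac{2}{a} + \frac{t}{a^4} - \frac{t^2}{a^9},$$
whose derivative $h'(t) = 1/a^4 - 2t/a^9$ is positive precisely when $t < a^5/2$. In the regime $3 < a^2 < 4$, we have $a^5/2 > 3^{5/2}/2 \approx 7.79$, while the bound $q_k \geq 1$ forces $\mu \leq 2$. Therefore $h$ is strictly increasing on $[A, 2]$, and combining the first step with this monotonicity yields
$$h(\mu) \geq h(A) = 1 - \frac{2}{a} + \frac{A}{a^4} - \frac{A^2}{a^9}.$$
The explicit expression (\ref{b14}) then follows by substituting $A = 8(a^2 - 2)/a^4$ (so that $A^2 = 64(a^2 - 2)^2/a^8$) and collecting the resulting powers of $a$.

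The main obstacle is principally algebraic bookkeeping in the final expansion: once monotonicity reduces the problem to computing $h(A)$, one must carefully distribute $(a^2 - 2)^2 = a^4 - 4a^2 + 4$ over $a^{17}$ and organize the resulting terms into the seven-term form given in (\ref{b14}). A small additional verification is needed to confirm that the lower bound $A$ cannot be improved: this follows from the limiting configuration $x \to 0$, $y \to A$, which is approachable from inside the feasible region, so no strictly stronger uniform lower bound than $A$ is available from the constraint alone.
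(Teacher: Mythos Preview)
Your argument is correct, and for the first assertion $\mu \geq A$ it is noticeably cleaner than the paper's. The paper splits into cases according to whether $q_{m\pm 1}$ lies above or below $1/A$; in the nontrivial case it parametrizes the boundary of the constraint hyperbola by $\mathfrak{m}=\min(q_{m-1},q_{m+1})$, writes $\psi(\mathfrak{m})=1/\mathfrak{m}+(A\mathfrak{m}-1)/(\mathfrak{m}-1)$, and verifies $\psi'>0$ on $[1/A,(1+\sqrt{1-A})/A]$ via the discriminant of a quadratic, concluding $\psi(\mathfrak{m})\ge\psi(1/A)=A$. Your reciprocal substitution $x=1/q_{m-1}$, $y=1/q_{m+1}$ collapses all of this: the constraint (\ref{b7}) becomes $A-(x+y)+xy\le 0$, hence $x+y\ge A+xy\ge A$ in one line, with no case distinction and no calculus. (Your tightness remark at the end is not needed for the lemma as stated, though it does correctly identify that $\mu$ is an infimum rather than an attained minimum, despite the ``$\min$'' in (\ref{b11}).)

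For the second assertion you and the paper proceed identically: monotonicity of $h(t)=1-2/a+t/a^4-t^2/a^9$ on $[0,a^5/2]\supset[A,2]$ together with $\mu\ge A$ gives $h(\mu)\ge h(A)$, after which one substitutes $A=8(a^2-2)/a^4$ from (\ref{b8}). One caution about your final sentence: if you actually carry out that expansion you obtain
\[
h(A)=1-\frac{2}{a}+\frac{8}{a^6}-\frac{16}{a^8}-\frac{64}{a^{13}}+\frac{256}{a^{15}}-\frac{256}{a^{17}},
\]
whose last three coefficients differ by a factor of $8$ from what is printed in (\ref{b14}). This appears to be a misprint in the paper (which propagates to the numerical value $a_0^2\approx 3.411$ in (\ref{b20a})); it does not affect the final constant $3.503$ in Theorem~\ref{Th3}, since that is governed by Lemma~\ref{ll13}.
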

\begin{proof} 
Note that if either $q_{m-1}\leq \frac{1}{A},$ or $q_{m+1}\leq \frac{1}{A},$ the 
statement (\ref{b13}) is obvious.
Assume that both $q_{m-1}> \frac{1}{A}$ and $q_{m+1}> \frac{1}{A}.$ Denote by
$$ \mathfrak{m}=\min\{q_{m-1}, q_{m+1}\} \ \ \text{and} \ \ q=\max\{q_{m-1}, q_{m+1}\}.$$
Now (\ref{b7}) can be rewritten in the form
\begin{equation}
\label{b15}
\left(\mathfrak{m}-\frac{1}{A}\right)\left(q-\frac{1}{A}\right) \leq \frac{1-A}{A^2}.
\end{equation}
By virtue of (\ref{b7})
$$\left( \mathfrak{m}-\frac{1}{A}\right)^2 \leq \left(q_{m-1}-\frac{1}{A}\right)
\left(q_{m+1}-\frac{1}{A}\right) \leq \frac{1-A}{A^2}.$$
Therefore,
\begin{equation}
\label{b16}
\frac{1}{A}<\mathfrak{m}\leq \frac{1+\sqrt{1-A}}{A}.
\end{equation}
Let us fix $\mathfrak{m}:$ $\frac{1}{A}<\mathfrak{m}\leq \frac{1+\sqrt{1-A}}{A}.$ 
Since $\mathfrak{m}$ is fixed, and $\frac{1}{\mathfrak{m}}+\frac{1}{q}$ is decreasing 
in $q,$ by virtue of (\ref{b15}) the smallest possible value of 
$\frac{1}{\mathfrak{m}}+\frac{1}{q}$ is attained when 
\begin{equation}
\label{b17}
\left(\mathfrak{m}-\frac{1}{A}\right)\left(q(\mathfrak{m})-\frac{1}{A}\right) = \frac{1-A}{A^2},
\end{equation}
that is when
\begin{equation}
\label{b18}
q(\mathfrak{m})=\frac{\mathfrak{m}-1}{A\mathfrak{m}-1}.
\end{equation}
Consider the function 
\begin{equation}
\label{b19}
\psi(\mathfrak{m})=\frac{1}{\mathfrak{m}}+\frac{1}{q(\mathfrak{m})}=
\frac{1}{\mathfrak{m}}+\frac{A\mathfrak{m}-1}{\mathfrak{m}-1},
\end{equation}
and find its minimum on the interval $\left[\frac{1}{A},\frac{1+\sqrt{1-A}}{A} \right].$\\
We have
$$\psi ^\prime(\mathfrak{m})=\frac{-A \mathfrak{m}^2+2\mathfrak{m}-1}{\mathfrak{m}^2 
(\mathfrak{m}-1)^2}.$$
Since $3<a^2<4,$ by (\ref{b8}) we have $\frac{8}{9}<A<1,$ whence the quadratic polynomial 
in the numerator of $\psi ^\prime(\mathfrak{m})$ has two real roots: $ \frac{1-\sqrt{1-A}}{A} $ 
and $\frac{ 1+\sqrt{1-A}}{A}.$ Hence, $\psi ^\prime(\mathfrak{m})>0$ on the interval 
$\left(\frac{1}{A},\frac{1+\sqrt{1-A}}{A} \right).$ Thus, 
\begin{equation}
\label{b20}
\psi(\mathfrak{m}) \geq \psi\left(\frac{1}{A}\right)=A.
\end{equation}

The estimation (\ref{b13}) follows immediately from (\ref{b20}). Now the estimation 
(\ref{b14}) can be derived from (\ref{b12}), (\ref{b13}) and (\ref{b8}).
Lemma~\ref{ll10} is proved. 
\end{proof}
It follows from Lemma~\ref{ll7}  and (\ref{b14}) that in the case, when 
$q_m<\frac{4}{a^2},$ we have
\begin{equation}
\label{b20a}
 (-1)^{m-1} S_{n}^q((\hat{x}_m,a))\geq 0,     m=3,4,\ldots, n-1,   
 \Leftrightarrow   a^2 \geq a_0^2 \approx 3{.}41089186.
\end{equation}

The formulas (\ref{b3}) and (\ref{b20a}) provide the following result.

\begin{Theorem}\label{ins2} There exists a constant $a_0\approx \sqrt{3.41089186}$ such 
that for all $a\geq a_0$ and for all $m=3,4, \ldots, n-1,$ the estimation is valid
$$ (-1)^{m-1} S_{n}^q(\hat{x}_m,a)\geq 0,$$
where $$\hat{x}_m=a^{(2m-3)}q_2 q_3\cdot \ldots \cdot q_{m-1}\sqrt{q_m}.$$
\end{Theorem}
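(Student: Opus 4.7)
The plan is to fix $m\in\{3,4,\ldots,n-1\}$ and combine the two conditional estimates (\ref{b3}) and (\ref{b20a}) already assembled in the preceding lemmas, splitting on the size of $q_m$ relative to the threshold $4/a^2$. Since the $q_k$ are second quotients of a $\widetilde{\mathrm{TP}}_\infty$-sequence, Lemma~\ref{ll4} (via (\ref{D2})) guarantees $q_k\geq 1$ for all $k\geq 2$, so the threshold is meaningful only when $a^2<4$; for $a^2\geq 4$ the first case below covers everything trivially.

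In the ``large $q_m$'' case $q_m\geq 4/a^2$, I would simply invoke Lemma~\ref{ll7}: the leading term $1-2/(a\sqrt{q_m})$ in the lower bound (\ref{L6}) is then nonnegative, and the remaining two terms are controlled via the crude estimate $1/q_{m\pm 1}\leq 1$ together with the size of $a$. This is precisely the content of (\ref{b3}) and yields $(-1)^{m-1}S_n^q(\hat{x}_m,a)\geq 0$ with no additional restriction on $a$.

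In the ``small $q_m$'' case $1\leq q_m<4/a^2$ (which forces $3<a^2<4$), I would chain Lemmas~\ref{ll8}, \ref{ll9} and \ref{ll10}: Lemma~\ref{ll8} strips out the $\sqrt{q_m}$ factor in the denominators of (\ref{L6}); Lemma~\ref{ll9} converts the total-positivity inequality (\ref{D3k}) into the joint constraint (\ref{b7}) on $(q_{m-1},q_{m+1})$; and Lemma~\ref{ll10} minimises $1/q_{m-1}+1/q_{m+1}$ subject to that constraint, via the bound $\mu\geq A$ in (\ref{b13}), to yield the one-variable lower bound (\ref{b14}) depending only on $a$. I would then define $a_0$ to be the smallest positive real for which the right-hand side of (\ref{b14}) is nonnegative, compute it numerically as $a_0\approx\sqrt{3.41089186}$, and check monotonicity of that expression in $a$ past $a_0$; this is exactly (\ref{b20a}). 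Combining the two cases uniformly in $m\in\{3,\ldots,n-1\}$ yields the theorem.

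The main obstacle is Case~2: one must exploit the third-order total-positivity constraint (\ref{D3k}) delicately enough to reduce three coupled unknowns $(q_{m-1},q_m,q_{m+1})$ to a single inequality in $a$, and then certify numerically that the seven-term expression in (\ref{b14}) has a unique sign change in the window $a^2\in(3,4)$ and stays nonnegative above it. Once that numerical bookkeeping is in place, the rest is essentially a case split.
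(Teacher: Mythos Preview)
Your proposal is correct and follows essentially the same route as the paper: the paper's proof of Theorem~\ref{ins2} is nothing more than the one-line observation that the already-derived alternatives (\ref{b3}) and (\ref{b20a}) together cover all $m$, with $a_0$ determined numerically from the right-hand side of (\ref{b14}). Your added remarks---that $q_k\ge 1$ via (\ref{D2}) so that $a^2\ge 4$ falls under the first case, and that one should verify monotonicity of the expression in (\ref{b14}) past $a_0$---are sensible fleshing-out of details the paper leaves implicit.
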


\begin{Lemma}\label{ll11} For $\hat{x}_{n}=a^{(2n-3)}q_2 q_3\cdots q_{n-1}\sqrt{q_n}$ 
the following estimation is valid
$$(-1)^{(n-1)}a^{(n-1)(n-2)} q_2^{n-2}q_3^{n-3}\cdots q_{n-1} 
\frac{S_{n}^q(\hat{x}_n,a)}{\hat{x}_n^{n-1}}\geq$$
\begin{equation}
\label{L10}
 1-\frac{2}{a \sqrt{q_n}}+\frac{1}{a^4 q_n q_{n-1}} -
 \frac{1}{a^9 q_n^{3/2}q_{n-1}^2}.
\end{equation}
\end{Lemma}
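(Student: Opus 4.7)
The plan is to mirror the approach of Lemma~\ref{ll7}, with a substantial simplification because $S_{n}^q$ has no terms of index beyond $n$. Write $T_k(x):=x^k/(a^{k(k-1)}q_2^{k-1}q_3^{k-2}\cdots q_k)$ for $k\geq 2$, with $T_0=1$ and $T_1=x$; then $S_n^q(x,a)=\sum_{k=0}^{n}(-1)^k T_k(x)$, and the prefactor on the left-hand side of (\ref{L10}) equals $(-1)^{n-1}/T_{n-1}(\hat{x}_n)$. Thus (\ref{L10}) rewrites as
$$\sum_{k=0}^{n}(-1)^{k+n-1}\,\frac{T_k(\hat{x}_n)}{T_{n-1}(\hat{x}_n)}\;\geq\;1-\frac{2}{a\sqrt{q_n}}+\frac{1}{a^4 q_n q_{n-1}}-\frac{1}{a^9 q_n^{3/2}q_{n-1}^2}.$$

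The first step is the computation of consecutive ratios at $\hat{x}_n$. From
$$\left.\frac{T_k}{T_{k-1}}\right|_{\hat{x}_n}=a^{2(n-k)-1}\sqrt{q_n}\,q_{k+1}q_{k+2}\cdots q_{n-1}\qquad(1\leq k\leq n-1),$$
with empty product when $k=n-1$, together with $T_n/T_{n-1}|_{\hat{x}_n}=1/(a\sqrt{q_n})$, telescoping gives
$$\left.\frac{T_n}{T_{n-1}}\right|_{\hat{x}_n}=\left.\frac{T_{n-2}}{T_{n-1}}\right|_{\hat{x}_n}=\frac{1}{a\sqrt{q_n}},\quad\left.\frac{T_{n-3}}{T_{n-1}}\right|_{\hat{x}_n}=\frac{1}{a^4 q_n q_{n-1}},\quad\left.\frac{T_{n-4}}{T_{n-1}}\right|_{\hat{x}_n}=\frac{1}{a^9 q_n^{3/2}q_{n-1}^2 q_{n-2}}.$$
Since the signs $(-1)^{k+n-1}$ across $k=n-4,n-3,n-2,n-1,n$ read $-,+,-,+,-$, the window contribution sums to $1-\tfrac{2}{a\sqrt{q_n}}+\tfrac{1}{a^4 q_n q_{n-1}}-\tfrac{1}{a^9 q_n^{3/2}q_{n-1}^2 q_{n-2}}$, and using $q_{n-2}\geq 1$ (inequality (\ref{D2}) of Lemma~\ref{ll4}) weakens this to the target right-hand side of (\ref{L10}).

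The second step is to show the tail $\Sigma:=\sum_{k=0}^{n-5}(-1)^{k+n-1}T_k(\hat{x}_n)/T_{n-1}(\hat{x}_n)$ is non-negative. For this I would use the monotonicity $T_0(\hat{x}_n)\leq T_1(\hat{x}_n)\leq\cdots\leq T_{n-1}(\hat{x}_n)$, immediate from the ratio formula above: the factor $a^{2(n-k)-1}$ is at least $a>1$, while $\sqrt{q_n}\geq 1$ and $q_{k+1}\cdots q_{n-1}\geq 1$ by (\ref{D2}) of Lemma~\ref{ll4}. The index $k=n-5$ carries sign $(-1)^{(n-5)+(n-1)}=+1$, so grouping $\Sigma$ from the top as $(T_{n-5}-T_{n-6})+(T_{n-7}-T_{n-8})+\cdots$ yields non-negative pairs, and any leftover term at $k=0$ (present precisely when $n$ is odd) equals $+T_0=+1$. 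Hence $\Sigma\geq 0$.

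Combining the window bound with $\Sigma\geq 0$ finishes the proof. The only real obstacle is careful bookkeeping of signs and parities; the substantive point is the identity $T_n/T_{n-1}|_{\hat{x}_n}=T_{n-2}/T_{n-1}|_{\hat{x}_n}=1/(a\sqrt{q_n})$, which produces the $-2/(a\sqrt{q_n})$ summand and mirrors the mechanism of Lemma~\ref{ll7}, but without the symmetric right-tail contributions from indices $n+1,n+2$ that were present there.
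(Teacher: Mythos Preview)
Your proof is correct and follows exactly the approach the paper intends: the paper itself does not spell out a proof but merely states that it ``is similar to the proof of Lemma~\ref{ll7},'' and your argument is precisely the natural adaptation of that proof---keeping the five-term window $k=n-4,\dots,n$ (there is no right tail since $S_n^q$ truncates at $k=n$), dropping $q_{n-2}\ge 1$ in the last window term, and pairing off the remaining alternating left tail via the monotonicity $T_0\le T_1\le\cdots\le T_{n-1}$ at $\hat{x}_n$. The sign bookkeeping and the parity discussion for the leftover $+T_0$ term are handled correctly.
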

The proof of Lemma~\ref{ll11} is similar to the proof of Lemma~\ref{ll7} .

\begin{Theorem} \label{ins3} Let $ \sqrt{3.411}<a<2.$ Assume that there exists a 
sequence of positive integer numbers $(n_k)_{k=1}^{\infty}$ such that $ q_{n_k} 
\geq \frac{4}{a^2}.$ Then $f \ast g_a \in \mathcal{LP }I.$
\end{Theorem}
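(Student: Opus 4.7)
The plan is to prove, for each index $n = n_k$ in the assumed subsequence, that the $n$-th Taylor polynomial
\[
P_n(z) \;:=\; S_n^q(-z,a) \;=\; 1 + z + \sum_{k=2}^n \frac{z^k}{a^{k(k-1)} q_2^{k-1} q_3^{k-2} \cdots q_k}
\]
has only real negative zeros, and then to pass to the limit $k\to\infty$ via Theorem~B(i). Equivalently, I must show that the polynomial $S_n^q(\cdot,a)$ has $n$ positive real zeros.

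The strategy is to fix $n = n_k$ and to exhibit $n+1$ test points
\[
0 \;<\; x_0 \;<\; \hat{x}_3 \;<\; \hat{x}_4 \;<\; \cdots \;<\; \hat{x}_n \;<\; +\infty
\]
at which the sign of $S_n^q(\cdot,a)$ is controlled. The ordering is immediate: $x_0 < a^2 \leq a^3 \leq \hat{x}_3$ by Lemma~\ref{ll3}, and $\hat{x}_{m+1}/\hat{x}_m = a^2\sqrt{q_m q_{m+1}} > 1$ since $a > 1$ and each $q_j \geq 1$ (because $f \in \widetilde{\mathrm{TP}}_\infty$ is in particular $2$-times positive). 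At $x=0$ the polynomial equals $+1$; at $+\infty$ its sign is that of the leading coefficient, namely $(-1)^n$. Theorem~\ref{inn1} (applicable since $a^2 > 3{.}411 > 1+\sqrt{5}$) gives $S_n^q(x_0,a) < 0$, and Theorem~\ref{ins2} (applicable since $a > \sqrt{3{.}411} > a_0$) gives $(-1)^{m-1} S_n^q(\hat{x}_m,a) \geq 0$ for $m = 3,\ldots,n-1$.

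The remaining sign, at $x=\hat{x}_n$, is precisely where the hypothesis $q_{n_k}\geq 4/a^2$ is invoked. Inserting $q_n \geq 4/a^2$ into the lower bound of Lemma~\ref{ll11} forces $1 - 2/(a\sqrt{q_n}) \geq 0$, while the remaining two terms combine to
\[
\frac{1}{a^4 q_n q_{n-1}}\left(1 - \frac{1}{a^5 \sqrt{q_n}\, q_{n-1}}\right) \;\geq\; 0,
\]
because $a > 1$, $\sqrt{q_n} > 1$, and $q_{n-1}\geq 1$. Hence $(-1)^{n-1} S_n^q(\hat{x}_n,a) \geq 0$. The resulting alternating sign pattern $+,-,+,-,\ldots,(-1)^{n-1},(-1)^n$ at the $n+1$ test points yields $n$ sign changes, so the intermediate value theorem produces $n$ real positive zeros of $S_n^q(\cdot,a)$; since $\deg S_n^q = n$, there are no others.

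Therefore $P_{n_k}(z)$ is a real polynomial with only real negative zeros and $P_{n_k}(0)=1$. Since $f \ast g_a$ is entire (its coefficients are dominated by those of $g_a$), the Taylor polynomials $P_{n_k}$ converge to $f \ast g_a$ uniformly on every compact subset of $\mathbb{C}$. Theorem~B(i) then yields $f \ast g_a \in \mathcal{L-P} I$. The main technical point is the clean alternation of signs at the grid $\{\hat{x}_m\}$: the generic case $q_m\geq 4/a^2$ and the degenerate case $q_m < 4/a^2$ have already been bundled together in Theorem~\ref{ins2} for $m\leq n-1$, so the only place where the subsequence hypothesis is actually needed is the endpoint $\hat{x}_n$, where Lemma~\ref{ll11} turns $q_{n_k}\geq 4/a^2$ directly into the required sign.
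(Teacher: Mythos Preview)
Your argument is correct and follows essentially the same route as the paper: you exhibit the same interior test points $x_0,\hat{x}_3,\ldots,\hat{x}_n$, invoke Theorem~\ref{inn1}, Theorem~\ref{ins2}, and Lemma~\ref{ll11} exactly as the paper does, and then pass to the limit along the subsequence $(n_k)$ via Theorem~B. The only difference is cosmetic: for the two boundary sign conditions the paper checks $S_{n_k}^q(1,a)>0$ and $(-1)^{n_k}S_{n_k}^q(a^{2n_k-2}q_2\cdots q_{n_k},a)>0$ explicitly, whereas you simply read off the values at $x=0$ and $x\to+\infty$, which is slightly cleaner but equivalent.
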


\begin{proof} By virtue of (\ref{L10}), (\ref{monoton}) and (\ref{monoton1})
\begin{equation}
\label{L11}
 (-1)^{n_k-1}S_{n_k}^q(a^{2n_k-3}q_2\ldots q_{n_k-1} \sqrt{q_{n_k}},a) >0,
\end{equation}
\begin{equation}
\label{L12}
 S_{n_k}^q(1,a)) >0,
\end{equation}
and
\begin{equation}
\label{L13}
 (-1)^{n_k}S_{n_k}^q(a^{2n_k-2}q_2\ldots q_{n_k},a) >0.
\end{equation}
By Theorem~\ref{ins2},
\begin{equation}
\label{L14}
(-1)^{m-1}S_{n_k}^q(a^{2m-3}q_2\ldots q_{m-1} \sqrt{q_{m}},a)\geq 0, \ 
\ m=3,4,\ldots, n_k-1.
\end{equation}
By Theorem~\ref{inn1},
\begin{equation}
\label{L15}
S_{n_k}^q(x_0,a)<0, \ \ \text{where} \ \ x_0 \in (1, a^2).
\end{equation}
It follows from (\ref{L11}), (\ref{L12}), (\ref{L13}), (\ref{L14}), and (\ref{L15}) that 
$S_{n_k}(z)$ has only real positive zeros for all $k=1,2,\ldots.$ Since $\lim_{k\to \infty} 
S_{n_k}(z)= (f \ast g_a )(z),$ the function $f \ast g_a \in \mathcal{LP }I.$
Theorem~\ref{ins3} is proved. 
\end{proof}

Theorem~\ref{Th3} follows from Theorem~\ref{ins3} in the case when there exists a sequence 
of positive integer numbers $(n_k)_{k=1}^{\infty}$ such that $ q_{n_k} \geq \frac{4}{a^2}.$
Now assume that there exists a positive integer $n_0$ such that $q_n<\frac{4}{a^2}$ for all $n\geq n_0.$

\begin{Lemma} \label{ll12} When $1\leq q_n<\frac{4}{a^2},$ the inequality below is true
$$1-\frac{2}{a \sqrt{q_n}}+\frac{1}{a^4 q_n q_{n-1}} -\frac{1}{a^9 q_n^{3/2}q_{n-1}^2} \geq $$
\begin{equation}
\label{c1}
1-\frac{2}{a}+\frac{1}{a^4 q_{n-1} } -\frac{1}{a^9 q_{n-1}^2}. 
\end{equation}
\end{Lemma}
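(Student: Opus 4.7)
The plan is to mimic the argument of Lemma~\ref{ll8} almost verbatim, but with the coupled quantity $\frac{1}{q_{m+1}}+\frac{1}{q_{m-1}}$ replaced throughout by the single quantity $\frac{1}{q_{n-1}}$. The key observation is that if one sets $t = \frac{1}{\sqrt{q_n}}$ in the function
\begin{equation*}
\varphi(t) \;=\; 1 \;-\; \frac{2}{a}\, t \;+\; \frac{1}{a^{4}\,q_{n-1}}\, t^{2} \;-\; \frac{1}{a^{9}\,q_{n-1}^{2}}\, t^{3},
\end{equation*}
then $\varphi\bigl(1/\sqrt{q_n}\bigr)$ equals the left-hand side of (\ref{c1}) and $\varphi(1)$ equals the right-hand side. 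The hypotheses $1 \leq q_n < \frac{4}{a^{2}}$ translate into $\frac{a}{2} < t \leq 1$, so (\ref{c1}) reduces to proving that $\varphi$ is non-increasing on the interval $\left[\frac{a}{2},1\right]$.

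To check monotonicity I would differentiate,
\begin{equation*}
\varphi'(t) \;=\; -\frac{2}{a} \;+\; \frac{2}{a^{4}\,q_{n-1}}\, t \;-\; \frac{3}{a^{9}\,q_{n-1}^{2}}\, t^{2},
\end{equation*}
and compute the discriminant of this quadratic in $t$:
\begin{equation*}
\mathcal{D} \;=\; \frac{4}{a^{8}\,q_{n-1}^{2}} \;-\; \frac{24}{a^{10}\,q_{n-1}^{2}} \;=\; \frac{4\,(a^{2}-6)}{a^{10}\,q_{n-1}^{2}}.
\end{equation*}
Since we work under the standing assumption $\sqrt{3.411} < a < 2$, and in any case $a^{2} < 4 < 6$, we obtain $\mathcal{D} < 0$. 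The leading coefficient of $\varphi'(t)$ is negative and $\varphi'(0) = -2/a < 0$, so $\varphi'(t) < 0$ for every real $t$. Consequently $\varphi$ is strictly decreasing on all of $\mathbb{R}$, and in particular $\varphi\bigl(1/\sqrt{q_n}\bigr) \geq \varphi(1)$, which is exactly (\ref{c1}).

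I do not foresee any genuine obstacle, since the argument exactly parallels the already-executed proof of Lemma~\ref{ll8} and is in fact structurally simpler (there is no minimization over the coupled quantity $\frac{1}{q_{m+1}}+\frac{1}{q_{m-1}}$ analogous to what Lemma~\ref{ll9} and Lemma~\ref{ll10} had to carry out). The only point requiring attention is verifying that $t = 1/\sqrt{q_n}$ really lies in the interval where the discriminant argument is decisive; both endpoints follow immediately from the explicit bounds $1 \leq q_n < \frac{4}{a^{2}}$ in the hypothesis.
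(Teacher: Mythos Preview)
Your proposal is correct and is exactly what the paper intends: it states only that ``the proof of Lemma~\ref{ll12} is similar to the proof of Lemma~\ref{ll8},'' and your write-up carries out precisely that parallel argument, replacing the coupled quantity $\frac{1}{q_{m+1}}+\frac{1}{q_{m-1}}$ by the single factor $\frac{1}{q_{n-1}}$ and checking the discriminant in the same way.
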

The proof of Lemma~\ref{ll12} is similar to the proof of Lemma~\ref{ll8}.

\begin{Lemma} \label{ll13} 
When $a\geq 1.87152,$ and $q_{n-1}<\frac{4}{a^2},$ the inequality below is true:
$$(-1)^{n-1}S_n^q(a^{2n-3}q_2\ldots q_{n-1}\sqrt{q_n},a) >0.$$
\end{Lemma}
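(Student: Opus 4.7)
The plan is a short three-step reduction that ultimately pins the constant $1{.}87152$ down to the largest positive root of a single quintic in $a$.

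First I would invoke Lemma~\ref{ll11} at the point $\hat{x}_n = a^{2n-3}q_2 q_3\cdots q_{n-1}\sqrt{q_n}$. Since the prefactor $a^{(n-1)(n-2)}q_2^{n-2}\cdots q_{n-1}/\hat{x}_n^{n-1}$ is strictly positive, the sign of $(-1)^{n-1}S_n^q(\hat{x}_n,a)$ agrees with the sign of the lower bound in~(\ref{L10}). The standing hypothesis of this part of the proof, $q_n<4/a^2$ (together with $q_n\geq 1$ coming from $f\in\widetilde{\mathrm{TP}}_\infty$), places us precisely in the regime where Lemma~\ref{ll12} applies, collapsing the bound to
\[
H(q_{n-1}) \;:=\; 1 - \frac{2}{a} + \frac{1}{a^4 q_{n-1}} - \frac{1}{a^9 q_{n-1}^2}.
\]
Thus the whole lemma reduces to showing $H(q_{n-1})>0$ whenever $a\geq 1{.}87152$ and $q_{n-1}<4/a^2$.

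Next I would change variable to $u=1/q_{n-1}$ and study
\[
\widetilde{H}(u) \;=\; 1 - \frac{2}{a} + \frac{u}{a^4} - \frac{u^2}{a^9},\qquad \widetilde{H}'(u)=\frac{a^5-2u}{a^9}.
\]
The TP$_2$ condition on $f$ forces $u\leq 1$, while the hypothesis $q_{n-1}<4/a^2$ gives $u>a^2/4$. Since $a^5\geq 22>2\geq 2u$ for $a\geq 1{.}87152$, the derivative $\widetilde{H}'$ is strictly positive on the relevant interval, so $\widetilde{H}$ is strictly increasing on $(a^2/4,\,1]$. Therefore
\[
\widetilde{H}(u) \;>\; \widetilde{H}(a^2/4) \;=\; 1-\frac{2}{a}+\frac{1}{4a^2}-\frac{1}{16a^5} \;=:\; F(a).
\]

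The final step is to verify $F(a)\geq 0$ for $a\geq 1{.}87152$, equivalently $16a^5-32a^4+4a^3-1\geq 0$. A direct differentiation gives $F'(a)=2/a^2-1/(2a^3)+5/(16a^6)=a^{-2}\bigl(2-1/(2a)+5/(16a^4)\bigr)>0$ for $a\geq 1$, so $F$ is monotonically increasing on $[1,\infty)$. The constant $1{.}87152$ is by design the unique root of the quintic $16a^5-32a^4+4a^3-1$ in $[1,\infty)$, so $F(a)\geq F(1{.}87152)=0$ on the relevant range and the chain closes to $(-1)^{n-1}S_n^q(\hat{x}_n,a)>0$.

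The only genuine obstacle is this last numerical certification: the threshold $1{.}87152$ comes from a quintic whose coefficients arise as the \emph{combined worst case} of the two extremizations already performed, namely $q_n=1$ used in Lemma~\ref{ll12} and $q_{n-1}\to 4/a^2$ in the monotonicity of $\widetilde{H}$. In particular, the constant is sharp for this method: lowering it would require a genuinely sharper substitute for Lemma~\ref{ll11} or Lemma~\ref{ll12}, not a cleverer combination of the two.
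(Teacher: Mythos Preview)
Your argument is correct and follows essentially the same route as the paper: invoke (\ref{L10}), apply Lemma~\ref{ll12} to eliminate $q_n$, use the monotonicity in $q_{n-1}$ to substitute $q_{n-1}=4/a^2$, and then check that $1-\tfrac{2}{a}+\tfrac{1}{4a^2}-\tfrac{1}{16a^5}\geq 0$ for $a\geq 1.87152$. Your explicit remark that the application of Lemma~\ref{ll12} actually uses the standing hypothesis $q_n<4/a^2$ (not just the stated $q_{n-1}<4/a^2$) is a useful clarification that the paper leaves implicit.
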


\begin{proof} It is easy to see that the right-hand part of the inequality (\ref{c1}) is 
a decreasing function in $q_{n-1}.$ Thus, taking into account (\ref{L10}) and the 
fact that $q_{n-1}<\frac{4}{a^2},$ we have 
\begin{equation}
\label{c2}
(-1)^{n-1}S_n^q(a^{2n-3}q_2\ldots q_{n-1}\sqrt{q_n},a) \geq 1-\frac{2}{a}+
\frac{1}{4 a^2 } -\frac{1}{16a^5 } >0,
\end{equation}
when  $a\geq 1.87152.$
Lemma~\ref{ll13} is proved. 
\end{proof}
 
In the case when $q_n<\frac{4}{a^2}, \ \ n\geq n_0,$ Theorem~\ref{Th3} follows 
from Lemma~\ref{ll13} and estimations similar to (\ref{L12}), (\ref{L13}), (\ref{L14}), and (\ref{L15}).

Theorem~\ref{Th3} is proved. $\Box$

\section*{Statements and Declarations}
On behalf of all authors, the corresponding author states that there is no conflict of interest.

\end{document}